\newtheorem{theorem}{Theorem}[section]
\newtheorem{proposition}[theorem]{Proposition}
\newtheorem{lemma}[theorem]{Lemma}
\newtheorem{corollary}[theorem]{Corollary}
\newcommand{\C}{\mathbb{C}}
\newcommand{\R}{\mathbb{R}}
\newcommand{\diag}{\text{diag}}
\newcommand{\Gtwo}{\mathbf{G}_2}
\newcommand{\Imo}{\operatorname{Im}\mathbb{O}}
\newcommand*\bigcdot{\mathpalette\bigcdot@{.5}}
\newcommand*\bigcdot@[2]{\mathbin{\vcenter{\hbox{\scalebox{#2}{$\m@th#1\bullet$}}}}}
\begin{document}

\title{Almost positive curvature on an irreducible compact rank $2$ symmetric space}

\author{Jason DeVito and Ezra Nance}

\date{}

\maketitle

\begin{abstract}

A Riemannian manifold is said to be almost positively curved if the sets of points for which all $2$-planes have positive sectional curvature is open and dense.  We show that the Grassmannian of oriented $2$-planes in $\mathbb{R}^7$ admits a metric of almost positive curvature, giving the first example of an almost positively curved metric on an irreducible compact symmetric space of rank greater than $1$.  The construction and verification rely on the Lie group $\Gtwo$ and the octonions, so do not obviously generalize to any other Grassmannians.

\end{abstract}

\section{Introduction} 
\label{intro}

\noindent The collection of closed simply connected manifolds admitting a Riemannian metric of positive sectional curvature forms an intriguing class.  Apart from spheres and projective spaces, all such known examples occur only in dimensions $6,7,12,13$ and $24$ \cite{Es1,AW,Wa,Baz1,De,GVZ,Ber}.  However, there are very few known obstructions.  For example, if $M$ is a closed simply connected manifold admitting a non-negatively curved metric, then there is no known obstruction to $M$ admitting a positively curved metric.

If one relaxes the positivity condition, examples become easier to construct.  For example, one may ask for a non-negatively curved metric on $M$ for which every $2$-plane at a single point is positively curved.  Such an $M$ is said to be \textit{quasi-positively curved}.  One may also ask for more: that the set of points for which all $2$-planes are positively curved be open and dense.  This property is referred to as \textit{almost positive curvature}.  Examples of manifolds admitting metrics of quasi-positive or almost positive curvature are more abundant \cite{GrMe1,PW2,Wi,DDRW,Ta1,KT,EK,Ke1,Ke2} and include several families in arbitrarily high dimensions.

In \cite{Zi2}, one finds a generalization of the classical Hopf conjecture: that no compact symmetric space of rank $2$ or more admits a metric of positive curvature.  In \cite{Wi}, Wilking shows that the reducible rank $2$ symmetric spaces $S^3\times S^2$ and $S^7\times S^6$ admit an almost positively curved metric, showing that the hypothesis of the conjecture cannot be weakened to almost positive curvature.  We provide the first irreducible counterexample to the weakened conjecture.

\begin{theorem}\label{main}  The Grassmannian of oriented $2$-planes in $\mathbb{R}^7$, $Gr_2\!\left(\mathbb{R}^7\right)$, admits an almost positively curved metric invariant under an $SU(3)$ action of cohomogeneity two.  Further, this metric descends to an almost positively curved metric on the Grassmannian of unoriented $2$-planes in $\mathbb{R}^7$.
\end{theorem}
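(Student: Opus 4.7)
The plan rests on the classical identification $\widetilde{Gr}_2(\mathbb{R}^7)\cong \Gtwo/U(2)$, where $\Gtwo$ acts transitively via its action on the imaginary octonions and the stabilizer of a unit plane is a $U(2)$ coming from the chain $SU(3)\subset \Gtwo$. This presentation is essential because it makes $SU(3)$ the natural candidate for both a subgroup of $\Gtwo$ used for Cheeger deformation and for the isometry group of the deformed metric on the quotient. Since $\dim \Gtwo/U(2)=10$ and $\dim SU(3)=8$, with a principal $SU(3)$-orbit a copy of $SU(3)/T^1$ of dimension $7$, the generic orbit has codimension $2$, giving the asserted cohomogeneity.

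The plan is to first put a bi-invariant metric on $\Gtwo$, then Cheeger-deform it via the subgroup $SU(3)$, obtaining a new left-invariant, right-$U(2)$-invariant metric $g$ on $\Gtwo$ that is also left-$SU(3)$-invariant. Pushing $g$ down through the right $U(2)$-action yields an $SU(3)$-invariant metric $\bar g$ on $\Gtwo/U(2)$. By O'Neill's formula, the sectional curvature of $\bar g$ is bounded below by the sectional curvature of $g$, which in turn is non-negative by the standard Cheeger deformation theory. The strategy to locate positively curved points then uses the Wilking/Cheeger criterion: a $2$-plane at $[e]\in \Gtwo/U(2)$ has zero curvature in $\bar g$ only if it lifts horizontally to a $2$-plane in $\Gtwo$ that is zero-curvature for $g$, and such planes in a Cheeger deformation must be tangent to the $SU(3)$-orbit through $e$ and spanned by a pair of vectors whose $SU(3)$-components commute in $\mathfrak{su}(3)$.

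The core computation, and where the octonions enter decisively, is to translate this Lie-algebraic commuting condition into a geometric condition on horizontal lifts to $\Gtwo$, then use the explicit description of $\mathfrak{g}_2\subset \text{End}(\Imo)$ and the decomposition $\mathfrak{g}_2=\mathfrak{su}(3)\oplus\mathbb{C}^3$ to enumerate the zero-curvature planes at a generic point. I expect these to exist only on the orbit $SU(3)\cdot[e]$ of a single preferred point $[e]$ (corresponding to a plane in $\Imo$ aligned with the standard $\mathbb{C}^3\subset\Imo$), so that the complement of a single codimension-$2$ orbit is open and dense and consists of positively curved points. This step—checking that the bad set really is contained in a lower-dimensional orbit, rather than a full-dimensional region—is the main obstacle, since it requires a careful case analysis of how horizontality interacts with the octonionic cross product defining $\Gtwo$, and is the reason the argument does not generalize to other Grassmannians.

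Finally, to pass to the unoriented Grassmannian, I would exhibit an involution on $\Gtwo/U(2)$ implementing the orientation-reversal that acts by isometries of $\bar g$ and commutes with the $SU(3)$-action; explicitly, this can be realized by right multiplication by a normalizing element of $U(2)$ in $\Gtwo$ that swaps the two orientations of the corresponding $2$-plane. Since this map is an isometry with fixed-point set of codimension at least one and the almost positive curvature property is preserved by Riemannian coverings with isometric deck transformations, the induced metric on $Gr_2(\mathbb{R}^7)$ is again almost positively curved.
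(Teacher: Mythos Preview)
Your construction has a fundamental gap: the metric you describe is \emph{homogeneous}, and a homogeneous metric cannot be almost positively curved without being positively curved. Concretely, the Cheeger deformation of the bi-invariant metric on $\Gtwo$ along $SU(3)$ yields a metric $\langle\cdot,\cdot\rangle_1$ that is left $\Gtwo$-invariant (not merely left $SU(3)$-invariant) and right $SU(3)$-invariant. When you quotient on the right by $U(2)\subset SU(3)$, the full left $\Gtwo$-action descends isometrically to $\Gtwo/U(2)$, so your $\bar g$ is a $\Gtwo$-homogeneous metric. Since every point looks like every other, a single zero-curvature plane anywhere forces zero-curvature planes everywhere; and $\Gtwo/U(2)$ is not on the list of positively curved homogeneous spaces. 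Your plan to analyze only the point $[e]$ and propagate by $SU(3)$ is a symptom of this: with your metric the isometry group is transitive, so there is nothing to propagate.

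What the paper does instead is apply Wilking's doubling trick on top of the Cheeger deformation: equip $\Gtwo\times\Gtwo$ with $\langle\cdot,\cdot\rangle_1+\langle\cdot,\cdot\rangle_1$ and take the biquotient $\Delta\Gtwo\backslash(\Gtwo\times\Gtwo)/(1\times U(2))\cong\Gtwo/U(2)$. This destroys the full $\Gtwo$-symmetry, leaving only a right $SU(3)$-action (and a left $H'$-action), which is genuinely cohomogeneity two. The payoff is an extra curvature condition: a horizontal zero-curvature plane at $(g,e)$ must satisfy not only $[X,Y]=[X_\mathfrak{p},Y_\mathfrak{p}]=0$ but also $[(Ad_{g^{-1}}X)_\mathfrak{p},(Ad_{g^{-1}}Y)_\mathfrak{p}]=0$. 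It is precisely this $g$-dependent condition that makes the zero-curvature locus a proper subset. Your expectation about that locus is also off: it is not a single codimension-two $SU(3)$-orbit but the union of two pieces, one diffeomorphic to $Gr_2(\mathbb{R}^6)$ and one to $\mathbb{C}P^2\times S^5$ (codimension one). The paper handles this by reducing to an explicit two-parameter family $\mathcal{F}$ meeting every $H'\times SU(3)$-orbit and computing directly there.
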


In \cite{KT}, Kerr and Tapp show the homogeneous space $\Gtwo/U(2)$, with ${U(2)\subseteq SU(3)\subseteq \Gtwo}$, admits a metric of quasi-positive curvature.  We recall that $\Gtwo/U(2)$ is known to be diffeomorphic to the Grassmannian of oriented $2$-planes in $\mathbb{R}^7$, $Gr_2\!\left(\mathbb{R}^7\right)$, see, for example, \cite[Lemma 1.1]{Ker1}.  This example is, in fact, the first metric of quasi-positive curvature on an irreducible symmetric space of rank bigger than $1$.  We show their quasi-positively curved metric is actually almost positively curved.

We show the $SU(3)$ action is by cohomogeneity $2$ by finding an explicit $2$-dimensional disc in $\Gtwo/U(2)$ which meets every orbit, see Proposition \ref{niceform}.  Unfortunately, because we rely on the octonions and $\Gtwo$ for the construction and verification of the metric properties, the method of proof does not seem to extend to any other irreducible symmetric spaces of rank $2$ or more.

We are actually able to obtain an explicit description of the set of points having at least one zero-curvature plane.

\begin{theorem}\label{main2}
An element $g =(g)_{ij}\in \Gtwo\subseteq SO(7)$ projects in $G_2/U(2)$ to a point having at least one zero-curvature plane iff $g_{12} = g_{13} = 0$ or $g_{11} = 0$.

\end{theorem}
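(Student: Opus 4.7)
My plan is to combine the $SU(3)$-invariance of the metric with the cohomogeneity-two normal form supplied by Proposition~\ref{niceform}. First I would check that both candidate conditions descend to $\mathbf{G}_2/U(2)$ and are moreover constant on $SU(3)$-orbits. Indeed $g_{1j}=\langle e_1, g e_j\rangle$ is unchanged under $g\mapsto kg$ for $k\in SU(3)$, since $k^{-1}e_1=e_1$; while right multiplication by $h\in U(2)$ fixes $g_{11}$ and sends $(g_{12},g_{13})$ to an invertible linear combination of itself, because $U(2)$ preserves $\mathrm{span}(e_2,e_3)$. Consequently both loci $\{g_{11}=0\}$ and $\{g_{12}=g_{13}=0\}$ are unions of $SU(3)$-orbits, and it is enough to verify the theorem on a single representative from each orbit, namely on the $2$-disc of Proposition~\ref{niceform} that parameterises $SU(3)\backslash\mathbf{G}_2/U(2)$.

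\textbf{Computation.} Using the curvature formula underlying the Kerr-Tapp construction in \cite{KT}, a $2$-plane at $[g]$ has zero sectional curvature precisely when a horizontal lift to the associated total space satisfies a system of commutator-vanishing relations in $\mathfrak{g}_2$ (and in the subalgebra implementing the deformation). Working inside $\mathbf{G}_2\subset SO(\Imo)$ and expressing $\mathfrak{g}_2$ via the octonionic cross-product keeps all of these brackets uniform and tractable. Parameterising the disc of Proposition~\ref{niceform} by two coordinates, I would write out the bracket equations for a generic candidate plane, solve the resulting polynomial system, and then $SU(3)$-saturate the zero set back up to $\mathbf{G}_2$. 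The outcome should be a semi-algebraic subset of the disc whose saturation equals the stated union of hypersurfaces.

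\textbf{Matching and main obstacle.} The two conditions have clean geometric meaning which guides the identification step: $g_{11}=0$ says $g e_1\perp e_1$, and $g_{12}=g_{13}=0$ says that the oriented plane $gP_0\subseteq\Imo$ representing $[g]$ is perpendicular to the distinguished unit vector $e_1$ fixed by $SU(3)$. These are exactly the $SU(3)\times U(2)$-invariant configurations for which the octonionic structure used to analyse commutators in $\mathfrak{g}_2$ degenerates, giving natural candidates on which to display an explicit flat plane. The main obstacle is the other direction: ruling out flat planes at every $g$ lying off both hypersurfaces. This requires a careful use of the $U(2)$-equivariant decomposition of $\mathfrak{g}_2/\mathfrak{u}(2)$ together with positive-definiteness of a residual quadratic expression in the bracket formula, and is where the argument ceases to generalise to other Grassmannians.
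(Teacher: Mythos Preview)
Your strategy is exactly the paper's: exploit the left $SU(3)$ / right $U(2)$ invariance of both the metric and the two candidate loci to reduce to the two–parameter slice of Proposition~\ref{niceform}, analyse the bracket conditions for horizontal zero-curvature planes there, exhibit explicit flat planes on the boundary, and saturate back. One bookkeeping point: Proposition~\ref{niceform} as stated gives a section for the action $(h,k)\ast g=hgk^{-1}$ with $H'$ on the \emph{left} and $K$ on the \emph{right}, whereas your invariance argument (correctly) places $SU(3)$ on the left of $G/H$; the paper resolves this by working at $(g,e)$ in the doubled picture, obtaining first $g_{21}=g_{31}=0$ on $\mathcal{F}$ and then transposing under $(g_1,g_2)\mapsto g_1^{-1}g_2$ to reach $g_{12}=g_{13}=0$. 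You should make that swap explicit.

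The genuine gap is in your Computation paragraph. ``Write out the bracket equations for a generic candidate plane and solve the resulting polynomial system'' is not feasible as stated: a generic plane in $\mathfrak{h}^\perp$ depends on roughly twenty parameters, and the system coming from Theorem~\ref{conditions} is intractable without a further reduction that your sketch does not mention. The paper's key observation is that the conditions $[X_\mathfrak{k},Y_\mathfrak{k}]=0$ and $[X_\mathfrak{p},Y_\mathfrak{p}]=0$, together with $X,Y\perp\mathfrak{h}$, can be read as vanishing curvature in $K/H=\mathbb{C}P^2$ and $G/K=S^6$; since both are positively curved, the $\mathfrak{k}$-parts and the $\mathfrak{p}$-parts of $X,Y$ are each linearly dependent, and after a change of basis one may take $X=X_\mathfrak{k}$ (four parameters) and $Y=Y_\mathfrak{p}$ (six parameters). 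Only after this reduction do the remaining equations $[X,Y]=0$ and $(Ad_{g^{-1}}X)_\mathfrak{p}\parallel(Ad_{g^{-1}}Y)_\mathfrak{p}$ collapse, on the interior of the disc, to a single identity of the form $y_5^2+y_6^2=0$ forcing $Y=0$. Your proposed ``$U(2)$-equivariant decomposition of $\mathfrak{g}_2/\mathfrak{u}(2)$'' does not by itself produce this $\mathfrak{k}/\mathfrak{p}$ splitting of the plane, and without it the ``positive-definiteness of a residual quadratic expression'' you invoke is neither visible nor obviously true.
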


We let $Z_1 = \{g\in \Gtwo: g_{12} = g_{13} = 0\}$ and $Z_2 = \{g\in \Gtwo: g_{11} = 0\}$.  Then we have the following description of the topology of the image of $Z_1$ and $Z_2$ in $\Gtwo/U(2)$.

\begin{theorem}\label{main3} The projection of $Z_1$ to $\Gtwo/U(2)$ has image diffeomorphic to the Grassmannian of oriented $2$-planes in $\mathbb{R}^6$, $Gr_2\!\left(\mathbb{R}^6\right),$ while the projection of $Z_2$ has image diffeomorphic to $\mathbb{C}P^2\times S^5$.  The intersection of the projections is diffeomorphic to the Aloff-Wallach space $W_{1,-1}=W_{1,0}$.

\end{theorem}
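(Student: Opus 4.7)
The plan rests on the identification $\Gtwo/U(2)\cong Gr_2(\R^7)$ of oriented $2$-planes, sending $gU(2)\mapsto g V_0$ with $V_0=\mathrm{span}(e_2,e_3)$, together with the octonionic description of the twistor projection $\pi\colon\Gtwo/U(2)\to\Gtwo/SU(3)=S^6$ as $gU(2)\mapsto ge_1$.  For an oriented $2$-plane $V$ with orthonormal basis $(u,v)$ one has $\pi(V)=u\times v$, the Cayley cross product on $\Imo$.  Under this dictionary, $g\in Z_1$ iff $V\subseteq e_1^\perp$, and $g\in Z_2$ iff $\pi(V)\perp e_1$.

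For $Z_1$, since $\Gtwo$ acts transitively on $Gr_2(\R^7)$ with stabilizer $U(2)$ at $V_0$, the restricted map $Z_1/U(2)\to Gr_2(e_1^\perp)=Gr_2(\R^6)$ is a smooth bijection onto the closed submanifold of oriented $2$-planes contained in $e_1^\perp$, hence a diffeomorphism.  For $Z_2$, the image is $\pi^{-1}(S^5)$, where $S^5=S^6\cap e_1^\perp$.  Since $S^5$ bounds the hemisphere $D_+^6=\{v\in S^6:\langle v,e_1\rangle\ge0\}$ in $S^6$, the principal $SU(3)$-bundle $\Gtwo\to S^6$ is trivial over the contractible $D_+^6$ and hence over its boundary $S^5$; the associated $\mathbb{C}P^2$-bundle $\pi^{-1}(S^5)$ is therefore trivial, giving $Z_2/U(2)\cong S^5\times\mathbb{C}P^2$.

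For the intersection, the cyclic identity $\phi(u,v,e_1)=\phi(e_1,u,v)=\langle e_1u,v\rangle$ for the $\Gtwo$-invariant $3$-form $\phi$ (with $\phi(x,y,z)=\langle xy,z\rangle$ on $\Imo$) shows that for $u,v\in e_1^\perp$ the $e_1$-component of $u\times v$ is $\omega(u,v)$, where $\omega=e_1\hook\phi$ is the Kähler form on $e_1^\perp\cong\mathbb{C}^3$.  Thus the image of $Z_1\cap Z_2$ is exactly the set of oriented $\omega$-isotropic $2$-planes in $\mathbb{C}^3$.  A two-step argument ($SU(3)$ acts transitively on the unit sphere $S^5\subseteq\mathbb{C}^3$, and the $SU(2)$-stabilizer of a chosen unit vector acts transitively on the unit sphere of its complex-orthogonal $\mathbb{C}^2$) shows that $SU(3)$ acts transitively on this set.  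Taking the base plane $V_1=\mathrm{span}_\R(f_1,f_2)\subseteq\mathbb{C}^3$, the stabilizer in $SU(3)$ consists of block matrices $\diag(R_\phi,1)$ with $R_\phi\in SO(2)$, which diagonalizes over $\mathbb{C}$ to the Aloff--Wallach circle $U(1)_{1,-1}=\{\diag(e^{i\phi},e^{-i\phi},1)\}$.  Hence $(Z_1\cap Z_2)/U(2)\cong SU(3)/U(1)_{1,-1}=W_{1,-1}$, equal to $W_{1,0}$ after a Weyl-group permutation of diagonal entries.  That this space is the unique Aloff--Wallach space failing to admit a homogeneous positive-curvature metric is the classical theorem of Aloff and Wallach.

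The principal obstacle is the dictionary between the matrix-entry conditions defining $Z_1,Z_2$ and the geometric conditions on the pair $(V,\pi(V))$; identifying the $e_1$-component of the Cayley cross product $u\times v$ with the Kähler form $\omega$ is the crucial bookkeeping step, after which the transitivity and stabilizer calculations are essentially routine linear algebra.  A secondary subtlety is verifying via the eigenvalue computation on $R_\phi\in SO(2)$ that the stabilizer is precisely the Aloff--Wallach circle of type $(1,-1)$ rather than a non-conjugate circle in the maximal torus of $SU(3)$.
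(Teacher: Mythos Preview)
Your argument is correct and follows the same overall architecture as the paper's: the identification $\Gtwo/U(2)\cong Gr_2(\R^7)$ via $g\mapsto\operatorname{span}(g_{\bullet 2},g_{\bullet 3})$, the twistor map $\pi$ to $S^6$ via $g\mapsto g_{\bullet 1}$, and the recognition that $\overline{Z}_1\cap\overline{Z}_2$ corresponds to Hermitian-orthogonal pairs in $\C^3\cong i^\perp$, yielding $SU(3)/\{\diag(R_\phi,1)\}\cong W_{1,-1}$.

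There are two differences worth noting. First, for $\overline{Z}_2$ the paper classifies $\C P^2$-bundles over $S^5$ by invoking $\pi_4(SU(3))=0$ via Bott periodicity, whereas you observe more directly that $S^5$ bounds a contractible hemisphere in $S^6$, so the restricted bundle is trivial; your argument is cleaner and avoids any homotopy-group computation. Second, for $\overline{Z}_1\cap\overline{Z}_2$ the paper writes down an explicit diffeomorphism (and its inverse) to $SU(3)/\{\diag(R(\alpha),1)\}$ by packaging $g_{\bullet 2},g_{\bullet 3}$ into the first two columns of an $SU(3)$ matrix, while you phrase the same condition invariantly via the associative $3$-form $\phi$ and the K\"ahler form $\omega=e_1\lrcorner\,\phi$, then appeal to transitivity of $SU(3)$ on $\omega$-isotropic $2$-planes plus a stabilizer computation. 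These are equivalent: your $\omega(u,v)=0$ is exactly the paper's condition $g_{\bullet 2}\perp i\,g_{\bullet 3}$, and your stabilizer $\{\diag(R_\phi,1)\}$ is precisely the subgroup the paper obtains. The paper's explicit inverse is slightly more self-contained, while your formulation makes the role of the $\Gtwo$-structure more transparent.
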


We note that $W_{1,-1}$ is the unique Aloff-Wallach space which does not admit a homogeneous metric of positive curvature.

The outline of this paper is as follows.  Section \ref{G2oct} will review necessary facts about the octonions and $\Gtwo$, proving Proposition \ref{niceform}. 

In Section \ref{metric}, we use Cheeger deformations and Wilking's doubling trick to construct our metric.  More precisely, if $\langle \cdot,\cdot \rangle_1$ denotes the result of Cheeger deforming a bi-invariant metric on $\Gtwo$ in the direction of $SU(3)$, we equip $\Gtwo\times \Gtwo$ with $\langle \cdot , \cdot \rangle_1 + \langle \cdot, \cdot \rangle_1$ and induce a metric on $\Gtwo/U(2)$ as the submersion metric $\Gtwo\times \Gtwo\rightarrow \Delta \Gtwo\backslash \Gtwo\times \Gtwo/1\times U(2)\cong \Gtwo/U(2)$.  We note that our construction is somewhat different from those found in, e.g., \cite{Wi,Ke2} in that $(\Gtwo,SU(3))$ is not a symmetric pair.  Nevertheless, we show that the ``usual'' curvature conditions match those of a symmetric pair, see Proposition \ref{equiv}.

In Section \ref{apc}, we complete the proof of Theorem \ref{main} and \ref{main2} by reducing the problem to a direct calculation using the $2$-dimensional disc of Proposition \ref{niceform}.  Finally, in Section \ref{top}, we compute the diffeomorphism type of the points having at least one zero-curvature plane, proving Theorem \ref{main3}.
\section{\texorpdfstring{$\Gtwo$}{G2} and the Octonions}

\label{G2oct}

Much of the background can be found in \cite{Mur}; we use the conventions found in \cite{Ke2}.  The octonions $\mathbb{O}$ are a non-associative normed division algebra of dimension $8$ over $\R$.  The octonions are alternative, meaning that the subalgebra generated by any two elements of $\mathbb{O}$ is associative.  A general octonion may be expressed in the form $a + b\ell$ where $a,b\in \mathbb{H}$, the set of quaternions.  Multiplication is defined by the Cayley-Dickson construction and is given by $$(a+b\ell)(c+d\ell) = (ac - \overline{d}\,b) + (da + b\,\overline{c})\ell.$$

We use the ordered basis $\{i,j,k,\ell, i\ell, j\ell, k\ell\}$ of $\Imo$, which we declare to be orthonormal.  All of our $7\times 7$ matrices will be expressed with respect to this basis.

We have the following multiplication table in the form (row)(column).

\begin{center}


\begin{table}[ht]

\begin{center}

\begin{tabular}{cccccccc}
 
\multicolumn{1}{c}{} & $\bm{i}$ & $\bm{j}$ & $\bm{k}$ & $\bm{\ell}$ & $\bm{i\ell}$ & $\bm{j\ell}$ & $\bm{k\ell}$ \rule{0pt}{4ex} \\

\rule{0pt}{6ex} 

$\bm{i}$ & $-1$ & $k$ & $-j$ & $i\ell$ & $-\ell$ & $-k\ell$ & $j\ell$  \\ 
\rule{0pt}{4ex}

$\bm{j}$ & $-k$ & $-1$ & $i$ & $j\ell$ & $k\ell$ & $-\ell$ & $-i\ell$ \\ 
\rule{0pt}{4ex}

$\bm{k}$ & $j$ & $-i$ & $-1$ & $k\ell$ & $-j\ell$ & $i\ell$ & $-\ell$\\ 
\rule{0pt}{4ex}

$\bm{\ell}$ & $-i\ell$ & $-j\ell$ & $-k\ell$ & $-1$ & $i$ & $j$ & $k$\\ 
\rule{0pt}{4ex}

$\bm{i\ell}$ & $\ell$ & $-k\ell$ & $j\ell$ & $-i$ & $-1$ & $-k$ & $j$\\ 
\rule{0pt}{4ex}

$\bm{j\ell}$ & $k\ell$ & $\ell$ & $-i\ell$ & $-j$ & $k$ & $-1$ & $-i$\\ 
\rule{0pt}{4ex}

$\bm{k\ell}$ & $-j\ell$ & $i\ell$ & $\ell$ & $-k$ & $-j$ & $i$ & $-1$\\ 
\rule{0pt}{4ex}

\end{tabular}

\caption{Multiplication table for Cayley numbers}\label{table:cayleymult}

\end{center}

\end{table}

\end{center}

The Lie group $G=\Gtwo$ is, by definition, the set of all automorphisms of the octonions.  That is, $$\Gtwo = \{A\in Gl_8(\mathbb{R}): A(xy) = A(x)A(y)\text{ for every }x,y\in \mathbb{O}\}.$$  One can show that $G\subset SO(8)$ and, using the fact that every element in $G$ fixes $1$ and therefore the imaginary octonions $\Imo$, that $G$ is naturally a subgroup of $SO(7)$.

Given $g\in G\subseteq SO(7)$, the following notations will be used:

\renewcommand{\labelitemi}{\textendash}
\begin{itemize}
\item  $g_{mn}$ refers the entry in row $m$ and column $n$
\item  $g_{\bullet n}$ refers to the $n$-th column
\item  $g_{m\bullet}$ refers to the $m$-th row
\item  $g_{\bullet s} g_{\bullet t}$ refers to octonionic multiplication of of $g_{\bullet s}$ and $g_{\bullet t}$ each interpreted as elements of $\Imo$
\item  $(g_{\bullet s}) \bigcdot (g_{\bullet t})$ refers to the usual Euclidean dot product.
\end{itemize}

The elements of $G$ have the following characterization, a proof of which can be found in \cite[pg. 186]{Mur}.

\begin{theorem}\label{G2desc}  Suppose $e_1, e_2, e_3\in \Imo$ are orthonormal and that, in addition, $e_3$ is perpendicular to $e_1 e_2$. Then there is a unique $g\in \Gtwo$ with $g(e_1) = i$, $g(e_2) = j$, and $g(e_3) = \ell$.
\end{theorem}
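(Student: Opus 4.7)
The plan is to derive uniqueness from the multiplicativity of $g$, and to prove existence via a Cayley--Dickson description of $\mathbb{O}$ built from the triple $(e_1, e_2, e_3)$. For uniqueness, any $g \in \Gtwo$ is an algebra automorphism, so once I show that $\{1, e_1, e_2, e_1 e_2, e_3, e_1 e_3, e_2 e_3, (e_1 e_2) e_3\}$ is a basis of $\mathbb{O}$, the values $g(e_1) = i$, $g(e_2) = j$, $g(e_3) = \ell$ force $g(e_1 e_2) = k$, $g(e_1 e_3) = i\ell$, $g(e_2 e_3) = j\ell$, and $g((e_1 e_2) e_3) = k\ell$, determining $g$ on all of $\mathbb{O}$.

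For existence, I would first build a natural quaternion subalgebra inside $\mathbb{O}$. By alternativity, the subalgebra $V$ generated by $e_1, e_2$ is associative; since $e_1, e_2$ are imaginary orthonormal octonions (hence anticommuting), standard identities yield $e_1^2 = e_2^2 = -1$, $e_1 e_2 = -e_2 e_1$, and $|e_1 e_2| = 1$, making $V = \text{span}\{1, e_1, e_2, e_1 e_2\}$ a quaternion subalgebra with orthonormal basis, isomorphic to $\mathbb{H}$ via $e_1 \mapsto i$, $e_2 \mapsto j$. The hypothesis $e_3 \perp e_1 e_2$, together with orthonormality of the $e_i$ and the fact that $e_3 \in \Imo$, gives $e_3 \perp V$. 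Using the adjoint identity $\langle w e_3, v \rangle = \langle e_3, \bar{w} v \rangle$ for the octonion inner product and the fact that $\bar{w} v \in V$ for all $v, w \in V$, I would then deduce $V \perp V e_3$, yielding an orthogonal decomposition $\mathbb{O} = V \oplus V e_3$ with the claimed orthonormal basis.

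The key technical step is to show that multiplication on $\mathbb{O}$, expressed in this decomposition, obeys the Cayley--Dickson formula
$$(a + b e_3)(c + d e_3) = (ac - \bar{d}\, b) + (d a + b \bar{c})\, e_3$$
for all $a, b, c, d \in V$. I would verify this using alternativity and the Moufang identities applied to products involving $V$-elements and $e_3$, together with the fact that $\overline{e_3} = -e_3$ since $e_3 \in \Imo$. Granted this, the linear map $g \colon \mathbb{O} \to \mathbb{O}$ fixing $1$ and sending $e_1, e_2, e_1 e_2, e_3, e_1 e_3, e_2 e_3, (e_1 e_2) e_3$ to $i, j, k, \ell, i\ell, j\ell, k\ell$ respectively is orthogonal, and it intertwines the Cayley--Dickson formulas governing $\mathbb{O} = V + V e_3$ and $\mathbb{O} = \mathbb{H} + \mathbb{H} \ell$; since $g$ restricts to the quaternion algebra isomorphism $V \to \mathbb{H}$ and satisfies $g(v e_3) = g(v)\ell$, it is multiplicative and hence lies in $\Gtwo$. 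The main obstacle is the careful verification of the Cayley--Dickson formula on $V + V e_3$ in the non-associative setting; once that is secured, the rest is routine assembly.
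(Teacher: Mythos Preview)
The paper does not actually prove this theorem; it simply cites \cite{Mur} (Murakami, pg.~186) for a proof. So there is no in-paper argument to compare against, and your task is really to supply the argument the paper outsources.

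Your sketch is the standard one and is correct in outline. Uniqueness is immediate once you know that $\{1,e_1,e_2,e_1e_2,e_3,e_1e_3,e_2e_3,(e_1e_2)e_3\}$ spans $\mathbb{O}$, since an algebra automorphism is determined on generators. For existence, your plan---show $V=\operatorname{span}\{1,e_1,e_2,e_1e_2\}$ is a copy of $\mathbb{H}$, show $e_3\perp V$ and hence $Ve_3\perp V$, then verify that the multiplication on $\mathbb{O}=V\oplus Ve_3$ obeys the same Cayley--Dickson formula
\[
(a+be_3)(c+de_3)=(ac-\overline{d}\,b)+(da+b\,\overline{c})\,e_3
\]
that defines $\mathbb{O}=\mathbb{H}\oplus\mathbb{H}\ell$---is exactly the route taken in standard references such as Murakami. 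The map $g$ you describe is then an isomorphism of Cayley--Dickson doublings over the quaternion isomorphism $V\to\mathbb{H}$, hence an automorphism of $\mathbb{O}$.

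The only place requiring real care, as you correctly flag, is the Cayley--Dickson verification in the non-associative setting. Concretely you must check the four cross-terms $a(de_3)$, $(be_3)c$, $(be_3)(de_3)$, and $ac$ separately; the first three use alternativity (or the Moufang identities) together with $\overline{e_3}=-e_3$ and the fact that left/right multiplication by a unit imaginary octonion is an isometry, while the last is trivial since $V$ is associative. None of these steps hides a surprise, but each needs to be written out: for instance, $(be_3)(de_3)=-\overline{d}\,b$ follows from the identity $\langle x,y\rangle = \Re(x\overline{y})$ and repeated use of $e_3^2=-1$ and the alternative laws, not from associativity directly. Once those four identities are in hand, the proof is complete.
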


The characterization of $G$ as the automorphisms of the octonions allows us to recognize when a matrix in $SO(7)$ is actually in $G$:  the matrix  must have columns $g_{\bullet 1},...,g_{\bullet 7}$ of the form $$\begin{bmatrix}g_{\bullet 1} & g_{\bullet 2} & g_{\bullet 1}\, g_{\bullet 2} & g_{\bullet 4} & g_{\bullet 1}\, g_{\bullet 4}& g_{\bullet 2} \,g_{\bullet 4} & (g_{\bullet 1}\, g_{\bullet 2})g_{\bullet 4}\end{bmatrix}.$$ Further, since the transpose, which is also the inverse, of a matrix in $G$ is in $G$, the same form holds for the rows $g_{1\bullet}, ..., g_{7\bullet}$.  In short, an element of $g\in G$ is determined by the columns $g_{\bullet 1}, g_{\bullet 2},$ and $g_{\bullet 4}$, and also by the rows $g_{1\bullet}, g_{2\bullet}$, and $g_{4\bullet}$.

The group $G$ has several important subgroups.  For example, if $$K = \{g\in G: g(i) = i\},$$ then $K$ is isomorphic to $SU(3)$ \cite[Theorem 5.5]{Ad2}.  Together with Theorem \ref{G2desc}, it is now easy to see that $G/K$ is diffeomorphic to $S^6$.

Now, set $H = \{g\in G: g\text{ preserves the oriented }jk\text{-plane}\}$.  Note that $H\subseteq SU(3)$:  since the action by $g$ on the $jk$-plane is simply rotation, $$g(j)g(k) = (\cos\theta j + \sin\theta k)(-\sin\theta j + \cos\theta k) = i.$$  Then according to \cite[Lemma 1.1]{Ker1}, $H$ is isomorphic to $U(2)\subseteq SU(3)$ and $G/H$ is diffeomorphic to $Gr_2\!\left(\mathbb{R}^7\right)$.

Next, consider the element $\sigma = \diag(-1,1,-1,1,-1,1,-1)\in N(K)\subseteq G$, where $N(K)$ denotes the normalizer of $K$ in $G$.    We can enlarge $H$ to $H' = H\cup \sigma H$.  Then, in a similar fashion, it can be shown that $G/H'$ is diffeomorphic to the Grassmannian of unoriented $2$-planes in $\mathbb{R}^7$.

We also have an alternative description of $H$.  Consider the action of $S^1\times Sp(1)$ on $\Imo = \operatorname{Im}\mathbb{H}\oplus \mathbb{H}\ell$ given by $(z,q)*(a+b\ell) = za\overline{z} + (zb\overline{q})\ell$.  The one can show the kernel of this action is generated by $(-1,-1)$ and that the action is by automorphisms of $\mathbb{O}$.  Since the $S^1\times Sp(1)$ action preserves the oriented $jk$-plane, we have an embedding $U(2)\rightarrow H$, which, must therefore be an isomorphism.

We will eventually see that the action of $H'\times K$ on $G$ by $(h,k)\ast g = hgk^{-1}$ is isometric.  With this in mind, the following proposition will be the key to understanding points having zero-curvature planes.

\begin{proposition}\label{niceform}
Consider the action of $H'\times K$ on $G$ given by $(h,k)\ast(g) = hgk^{-1}$.  Then every orbit passes through a unique point of the form \begin{equation}\begin{bmatrix}\label{matrix} \cos\theta & \sin\theta & 0 &0 & 0 & 0 & 0 \\ -\cos\phi \sin\theta & \cos\phi \cos\theta & 0 & -\sin\phi & 0 & 0 & 0\\ 0 & 0 & \cos\phi & 0 & -\sin\phi \cos\theta & -\sin\phi \sin\theta & 0 \\ -\sin \phi \sin \theta &  \sin\phi \cos\theta & 0 & \cos\phi & 0 & 0 & 0\\ 0 & 0 & \sin\phi & 0 & \cos\phi \cos\theta & \cos\phi \sin\theta & 0 \\ 0 & 0 & 0 & 0 & -\sin\theta & \cos\theta & 0 \\ 0 & 0 & 0 &0 & 0 & 0 & 1 \end{bmatrix}\end{equation} where $0\leq \theta ,\phi\leq \pi/2$.

\end{proposition}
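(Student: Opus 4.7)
By Theorem~\ref{G2desc}, an element of $G$ is determined by the triple $(g(i), g(j), g(\ell))$, so my plan is to reduce $g$ to (\ref{matrix}) by normalizing columns $g_{\bullet 1}, g_{\bullet 2}, g_{\bullet 4}$ in sequence. Under the action $(h,k)\ast g = hgk^{-1}$, the first column transforms as $g(i) \mapsto h(g(i))$ (since $k\in K$ fixes $i$), while the right $K$-action modifies $g(j)$ and $g(\ell)$ by pre-composition. The remaining columns 3, 5, 6, 7 are then automatic from octonionic multiplication.

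For the first step, I would work with the decomposition of $\Imo$ as an $H \cong (S^1 \times Sp(1))/\{\pm 1\}$ representation: $\Imo = \R\{i\} \oplus \R\{j,k\} \oplus \mathbb{H}\ell$, where the $i$-line is fixed, the $jk$-plane is rotated by double the angle by $S^1$ (with $Sp(1)$ trivial), and $\mathbb{H}\ell = \C^2$ carries the standard $U(2)$ representation ($S^1$ by scalar multiplication, $Sp(1)$ by $b \mapsto b\bar{q}$). An $H$-orbit in $S^6$ is therefore parameterized by the three nonnegative norms of the isotypic projections. By composing an $Sp(1)$-element (rotating the $\mathbb{H}\ell$-part onto $\R\ell$), an $S^1$-element (aligning the $jk$-part with $\R j$), a second $Sp(1)$-element (restoring the $\mathbb{H}\ell$-alignment), and possibly $\sigma \in H' \setminus H$ to adjust the sign of the $i$-component, I can produce a representative $g(i) = \cos\theta\,i - \cos\phi\sin\theta\,j - \sin\phi\sin\theta\,\ell$ with $\theta,\phi \in [0, \pi/2]$, matching the target first column.

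Fixing this $g(i)$, the right $K$-action sends $g(j)$ to $g(k^{-1}j)$. Since $K \cong SU(3)$ acts as the standard representation on $\Imo \ominus \R\{i\} \cong \C^3$, the vector $k^{-1}(j)$ ranges over $S^5 \subset \C^3$ as $k$ ranges over $K$, so $g(j)$ can be moved to any unit vector perpendicular to $g(i)$. A direct inner-product calculation shows that the target $g(j) = \sin\theta\,i + \cos\phi\cos\theta\,j + \sin\phi\cos\theta\,\ell$ is perpendicular to the target $g(i)$, hence reachable. The residual stabilizer of $j$ in $K$ is an $SU(2)$ acting by the standard representation on the complex complement $\mathbb{H}\ell$ of $\C\{j\}$ in $\C^3$, so it acts transitively on $S^3 \subset \mathbb{H}\ell$; provided the target $g(\ell) = -\sin\phi\,j + \cos\phi\,\ell$ is orthogonal to the target $g(i), g(j)$, and $g(k) = g(i)g(j) = \cos\phi\,k + \sin\phi\,i\ell$ (a short verification), this residual $SU(2)$ can match the fourth column. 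Uniqueness follows because the triple of isotypic norms of $g(i)$ is an $(H' \times K)$-invariant that uniquely determines $(\theta, \phi)$ on the interior $(0,\pi/2)^2$. The main obstacle is the sign and orientation bookkeeping in the first step: because the $S^1$-factor of $H$ acts nontrivially on both the $\R\{j,k\}$- and $\mathbb{H}\ell$-summands, producing the precise signs of (\ref{matrix}) requires a coordinated choice of $H'$-elements rather than three independent reductions, and some care is needed at the boundary of the parameter square where the generic stabilizer arguments degenerate.
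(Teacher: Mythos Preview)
Your proposal is correct and follows essentially the same strategy as the paper's proof: first normalize the first column via the left $H'$-action (the paper likewise extracts $\theta$ and $\phi$ from the $H'$-invariant isotypic norms of $g_{\bullet 1}$, applying $\sigma$ for the sign of $g_{11}$, then $R(\alpha)$ for the $jk$-part, then the $SU(2)\subset H$ for the $\mathbb{H}\ell$-part), and then use the right $K$-action together with its residual $SU(2)$ to remove the remaining freedom. The only difference in execution is that after fixing $g_{\bullet 1}$ the paper uses the right $K$-action to normalize the first \emph{row} (via the transitive $SU(3)$-action on the last six entries of $g_{1\bullet}$) and then the residual $SU(2)\subset K$ to normalize $(g_{24},\dots,g_{27})$, deducing the rest from row orthogonality and the $\Gtwo$ constraint, whereas you normalize the \emph{columns} $g_{\bullet 2}$ and $g_{\bullet 4}$ directly via $g(j)\mapsto g(k^{-1}j)$ and $g(\ell)\mapsto g(k'^{-1}\ell)$. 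Both routes invoke exactly the same transitivity facts ($SU(3)$ on $S^5$, $SU(2)$ on $S^3$) and the same invariants for uniqueness; your column-based phrasing is a bit more conceptual since $g$ is determined by $(g(i),g(j),g(\ell))$, while the paper's row-based step makes the bookkeeping for the remaining entries slightly more mechanical. Your caveat about the boundary of the $(\theta,\phi)$-square is well placed and matches the paper's treatment, which also handles the degenerate cases ($\cos\theta=1$, $\sin\phi=0$) separately.
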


We use the notation $\mathcal{F}\subseteq \Gtwo$ to denote the subset of points having this form.

\begin{proof}  Let $g = (g)_{ij}\in G\subseteq SO(7)$.  If $g_{11} < 0$, we initially apply the element $\sigma \in H'\times\{1\}$.  Now, from the above description of $H$ as a quotient of $S^1\times Sp(1)$, we see that each element of $H$ is a block diagonal matrix of the form $\diag(1, R(\alpha), A)$ where  $$R(\alpha) = \begin{bmatrix} \cos\alpha & \sin\alpha \\ -\sin\alpha & \cos\alpha\end{bmatrix},$$ and $A$ is a $4\times 4$ matrix in a $U(2)\subseteq SO(4)$.  Similarly, as every element of $K$ fixes $i$, each element has the form $\diag(1, B)$ where $B$ is an element of an $SU(3)\subseteq SO(6)$.  In particular, the $H\times K$ action on $G$ fixes the $g_{11}$ coordinate.  We uniquely define $\theta\in [0,\pi/2]$ via $g_{11} = \cos(\theta)$.  Note that if $\cos(\theta) =  1$, then $g\in SU(3)$, so clearly lies in the orbit of the identity, having the form of \eqref{matrix}.  Thus, we may assume $\theta\in (0,\pi/2]$ for the remainder of this proof.

We now consider the subaction by $H\times\{1\}$ on the first column $g_{\bullet 1}$ of $g$.  Note that $R(\alpha)$ acts by rotations on $\begin{bmatrix} g_{21} & g_{31}\end{bmatrix}^t,$ so we see that the length $g_{21}^2 + g_{31}^2$ is an invariant under the $H$ action.  In particular, since ${0\leq g_{21}^2 + g_{31}^2\leq \sin^2\theta}$, we may uniquely define $\phi$ by the equation $\cos\phi\sin\theta = \sqrt{g_{21}^2 + g_{31}^2}$.
Now, by picking $\alpha$ appropriately, we may rotate the vector $\begin{bmatrix} g_{21}& g_{31}\end{bmatrix}^t$ to the vector $\begin{bmatrix} -\cos\phi \sin\theta &  0  \end{bmatrix}^t$.  After this, we may then choose a new element of $H$ with $\alpha = 0$, that is, an element of $SU(2)\subseteq H$, to modify the rest of $g_{\bullet 1}$ to have the form of $\eqref{matrix}$.  This follows because the only faithful representation of $SU(2)$ on $\mathbb{R}^4$ is transitive on each sphere of fixed radius centered at the origin.

We next consider the subaction by $\{1\}\times K$ on the first row $g_{1\bullet}$ of $g$.  Since each element of $K$ has the form $\diag(1,B)$, $g_{\bullet 1}$ is fixed.  In addition, $K$ acts on the last $6$ coordinates of $g_{1\bullet}$, $(g_{12}, g_{13},..., g_{17})$ by some real representation.  There is a unique non-trivial $6$-dimensional real representation of $SU(3)$ coming from the identification of $\C^3$ with $\R^6$, and this representation acts transitively on the sphere of any fixed radius centered at the origin.  In particular, the $\{1\}\times K$ orbit through $g$ contains a point whose first row is as in $\eqref{matrix}$.

\

We next consider the subgroup of $K$ given by those elements which fix $i$, $j$, and $k$.  This subgroup is isomorphic to $SU(2)$; in fact, it is the $SU(2)$ in $H$.  A matrix in this subgroup has the form $\diag(1,1,1,A)$, so right multiplication by it will not modify $g_{\bullet 1}$, $g_{\bullet 2}$ or $g_{\bullet 3}$.  However, as done previously, we may use such an element to move the vector $(g_{24}, g_{25}, g_{26}, g_{27})$ to one of the form $(\lambda, 0,0,0)$ for some non-positive real number $\lambda$.

Now, $g_{22}$ is determined by the fact $(g_{1\bullet})\bigcdot (g_{2\bullet}) = 0$; likewise, $g_{32}, g_{42}, g_{52}, g_{62},$ and $g_{72}$.  Since we now know the first two columns, octonionic multiplication gives the third.  At this point, $\lambda = -\sin \phi$ is now determined since the length of the $g_{2\bullet}$ is $1$.  This completes the determination of the second row and thus, of the third row as well.

If $\sin \phi = 0$, then we see we can pick a new element of $H$ which moves the column $\begin{bmatrix} g_{44} & g_{54} & g_{64} & g_{74}\end{bmatrix}^t$ to $\begin{bmatrix} 1 & 0 & 0 &0\end{bmatrix}^t$.  This finishes the determination of column $4$, and hence all the rest of the entities.

\

On the other hand, if $\sin \phi \neq 0$, then the equation $0 = (g_{\bullet 1})\bigcdot(g_{\bullet 4})$ gives  $g_{44} = \cos \phi$.  Since the row $g_{4\bullet}$ has unit length, this now forces all the remaining unknown entries in $g_{4\bullet}$ to be $0$, finishing the determination of $g_{4\bullet }$.  The rest of the entries are now determined since $g\in \Gtwo$.

\end{proof}

As a corollary to the proof, we see that for $g\in G$, $|g_{11}|$, the length of $(g_{21}, g_{31})$, and the length of $(g_{41}, g_{51}, g_{61}, g_{71})$ determine the $H'\times K$ orbit.

\

\
We now describe the Lie algebras of $H\subseteq K\subseteq G\subseteq SO(7)$.  Since we are following the conventions of \cite{Ke1}, $\mathfrak{g} = \mathfrak{g}_2 $ consists of all real matrices of the form  \begin{equation}\label{g2form}\begin{bmatrix} 0 & x_1 + x_2 & y_1 + y_2 & x_3 + x_4 & y_3 + y_4& x_5 + x_6 & y_5 + y_6\\ -(x_1 + x_2) & 0 & z_1 & - y_5 & x_5 & -y_3 & x_3\\ -(y_1 + y_2) & -z_1 &0 & x_6 & y_6 & -x_4 & -y_4\\ -(x_3 + x_4) & y_5 & -x_6 & 0 & z_2 & y_1 & -x_1\\ -(y_3 + y_4) & -x_5 & -y_6 & -z_2 & 0 & x_2 & y_2\\ -(x_5 + x_6) & y_3 & x_4 & -y_1 &  -x_2 & 0 & z_1 + z_2\\ -(y_5  + y_6)& -x_3 & y_4 & x_1 & -y_2 & -(z_1 + z_2) & 0 \end{bmatrix} .\end{equation}

Then the subalgebra $\mathfrak{k} = \mathfrak{su}(3)$ consists of those matrices in $\mathfrak{g}$ whose first row and first column vanish, and the subalgebra $\mathfrak{h} = \mathfrak{u}(2)$ has the additional constraint that $x_3 = x_4 = x_5 = x_6 = 0$ and similarly for $y$.  With respect to the bi-invariant metric $\langle X, Y\rangle_0 = -\operatorname{Tr}(XY)$, we have an orthogonal splitting $\mathfrak{g} = \mathfrak{k}\oplus \mathfrak{p}$.  Writing $X\in \mathfrak{g}$ in the form \eqref{g2form}, a simple calculation shows the projection of $X\in\mathfrak{g}$ to $\mathfrak{k}$ is \begin{equation}\label{su3form}  \frac{1}{2} \begin{bmatrix} 0 & 0 & 0 & 0 & 0 & 0 & 0 \\ 0 & 0 & 2z_1 & y_6 - y_5 & x_5 - x_6 & y_4 - y_3 & x_3 - x_4\\ 0 & -2z_1 & 0 & x_6-x_5 & y_6-y_5 & x_3 - x_4 & y_3 - y_4 \\ 0 & y_5-y_6 & x_5-x_6 & 0 & 2z_2 & y_1-y_2 & x_2 - x_1\\ 0 & x_6-x_5 & y_5 - y_6 & -2z_2 & 0 & x_2 - x_1 & y_2 - y_1\\ 0 & y_3 - y_4 & x_4 - x_3 & y_2 - y_1 & x_1 - x_2 & 0 & 2z_1 + 2z_2\\ 0 & x_4 - x_3 & y_4 - y_3 & x_1 - x_2 & y_1 - y_2 & -2z_1 - 2z_2 & 0\end{bmatrix}\end{equation} and that the projection to $\mathfrak{p}$ is \begin{equation} \label{pform} \frac{1}{2}\begin{bmatrix} 0 & 2(x_1 + x_2) & 2(y_1 + y_2) & 2(x_3 + x_4) & 2(y_3 + y_4) & 2(x_5 + x_6) & 2(y_5 + y_6)\\ -2(x_1 + x_2) & 0 & 0 & -(y_5 + y_6) & x_5 + x_6 &  -(y_3 + y_4) & x_3 + x_4\\ -2(y_1 + y_2) & 0 & 0 & x_5 + x_6 & y_5 + y_6 & -(x_3 + x_4) & -(y_3 + y_4)\\ -2(x_3 + x_4) & y_5 + y_6 & -(x_5 + x_6) & 0 & 0 & y_1 + y_2 & -(x_1 + x_2)\\ -2(y_3 + y_4) & -(x_5 + x_6) & -(y_5 + y_6) & 0 & 0 & x_1 + x_2 & y_1 + y_2\\ -2(x_5 + x_6) & y_3 + y_4 & x_3 + x_4 & -(y_1 + y_2) & -(x_1 + x_2) & 0 & 0 \\ -2(y_5 + y_6) & -(x_3 + x_4) & y_3 + y_4 & x_1 + x_2 & -(y_1 + y_2) & 0 & 0 \end{bmatrix} .\end{equation}

From this, we see that the entries of a matrix in $\mathfrak{p}$ are determined by the top row.  Because of this, we will sometimes abuse notation and refer to such a matrix by the ordered $6$-tuple $(x_1 + x_2, y_1 + y_2,.... ) \in \mathbb{R}^6$.

\section{Construction of the Metric}\label{metric}

As is shown in \cite{Ta1}, the metric we will use is, up to scaling, isometric to the metric considered by Kerr and Tapp \cite{KT}.  We construct our metric via Cheeger deformations \cite{Ch1} and Wilking's doubling trick \cite{Wi}.

Let $G$ denote an arbitrary compact Lie group with a closed subgroup $K\subseteq G$.  We use the notation $\mathfrak{k}\subseteq \mathfrak{g}$ to denote the Lie algebras of $K$ and $G$.

Let $\langle X,Y\rangle_0$ denote a bi-invariant metric on $G$; for $G =\Gtwo$, we use $\langle X,Y\rangle=-\operatorname{Tr}(XY)$.  We let $\mathfrak{p}\subseteq \mathfrak{g}$ denote the orthogonal complement to $\mathfrak{k}$ with respect to $\langle \cdot, \cdot \rangle_0$ and we use the notation $X = X_\mathfrak{k} + X_\mathfrak{p}$ to refer to the projections of $X$ onto $\mathfrak{k}$ and $\mathfrak{p}$.

 We let $\langle \cdot, \cdot \rangle_1$ denote the left $G$-invariant, right $K$-invariant metric obtained via Cheeger deforming $\langle \cdot, \cdot \rangle_0$ in the direction of $K$.   That is, we first equip $G\times K$ with the metric $\langle \cdot, \cdot\rangle_0 + t \langle \cdot, \cdot\rangle_0|_K$ for a fixed parameter $t>0$.  The group $K$ acts isometrically on $G\times K$ via $k\ast(g_1,k_1) = (g_1k^{-1}, kk_1)$.  One can easily verify the map $G\times K\rightarrow G$ given by $(g_1,k_1)\mapsto g_1k_1$ descends to a diffeomorphism $G\times_K K\cong G$, which we use to transport the submersion metric on $G\times_K K$ to $G$, obtaining the metric $\langle \cdot ,\cdot \rangle_1$.  One can also verify (see, for example, \cite{Ke1}) that $\langle X,Y\rangle_1 = \langle X, \phi(Y)\rangle_0$ where $\phi(Y)= \frac{t}{t+1}Y_\mathfrak{k} + Y_\mathfrak{p}$.  From O'Neill's formula \cite{On1} for curvature of a Riemannian submersion, together with the fact that bi-invariant metrics are always non-negatively curved, we see that $\langle \cdot, \cdot \rangle_1$ is non-negatively curved.

We also point out that $G\times K$ acts by isometries on $G\times_K K$ by $(g,k)\ast[g_1, k_1] = [g g_1, k_1 k^{-1}]$.  In particular, the metric $\langle \cdot, \cdot\rangle_1$ is left $G$-invariant and right $K$-invariant, as claimed.  In fact, the isometry group is often larger.

\begin{proposition}\label{isomgroup}

If $N(K)$ denotes the normalizer of $K$ in $G$, then $\langle \cdot, \cdot \rangle_1$ is right $N(K)$-invariant.

\end{proposition}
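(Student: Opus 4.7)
The plan is to reduce the claim to a purely Lie-algebraic statement at the identity and then check it using the defining property of the normalizer. Since $\langle \cdot, \cdot \rangle_1$ is already known to be left $G$-invariant, it suffices to show that for each $n\in N(K)$ the differential of right translation $R_n\colon G\to G$, $g\mapsto gn$, preserves $\langle \cdot,\cdot\rangle_1$ at the identity. In the left-trivialization of $TG$, a straightforward computation shows that this differential acts on $\mathfrak{g}=T_eG$ by $\operatorname{Ad}(n^{-1})$, so the whole question reduces to whether $\operatorname{Ad}(n^{-1})$ is a $\langle \cdot, \cdot \rangle_1$-isometry of $\mathfrak{g}$.

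Next, using the formula $\langle X, Y \rangle_1 = \langle X, \phi(Y) \rangle_0$ with $\phi(Y) = \tfrac{t}{t+1} Y_\mathfrak{k} + Y_\mathfrak{p}$, together with the bi-invariance of $\langle \cdot, \cdot \rangle_0$, I would compute
\begin{align*}
\langle \operatorname{Ad}(n^{-1}) X, \operatorname{Ad}(n^{-1}) Y \rangle_1
&= \langle \operatorname{Ad}(n^{-1}) X, \phi(\operatorname{Ad}(n^{-1}) Y) \rangle_0 \\
&= \langle X, \operatorname{Ad}(n)\,\phi(\operatorname{Ad}(n^{-1}) Y) \rangle_0.
\end{align*}
So the problem collapses to verifying the algebraic identity $\operatorname{Ad}(n)\circ \phi = \phi\circ \operatorname{Ad}(n)$ on $\mathfrak{g}$, which is equivalent to $\operatorname{Ad}(n)$ preserving the $\langle \cdot, \cdot \rangle_0$-orthogonal decomposition $\mathfrak{g}=\mathfrak{k}\oplus \mathfrak{p}$.

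This final step is where the normalizer hypothesis enters and it is immediate: the condition $n\in N(K)$ means $nKn^{-1}=K$, and differentiating gives $\operatorname{Ad}(n)(\mathfrak{k}) = \mathfrak{k}$; since $\operatorname{Ad}(n)$ is an isometry of the bi-invariant metric $\langle \cdot, \cdot \rangle_0$, it automatically preserves the orthogonal complement $\mathfrak{p}=\mathfrak{k}^\perp$. Hence $\operatorname{Ad}(n)$ commutes with both projections $Y\mapsto Y_\mathfrak{k}$ and $Y\mapsto Y_\mathfrak{p}$, and therefore with $\phi$, which closes the argument.

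There is really no obstacle here; the proof is forced by the general formalism of Cheeger deformations, and the only substantive input is that $\operatorname{Ad}(n)$ stabilizes $\mathfrak{k}$. An equivalent geometric route, which I would mention as a sanity check, is to observe that the map $(g_1,k_1)\mapsto (g_1 n,\, n^{-1}k_1 n)$ is an isometry of the product metric $\langle \cdot,\cdot\rangle_0 + t\langle \cdot,\cdot\rangle_0|_K$ on $G\times K$ and is equivariant for the Cheeger $K$-action up to the automorphism $k\mapsto n^{-1}kn$ of $K$, so it descends under the submersion $G\times K\to G\times_K K\cong G$ to exactly $R_n$; this repackages the same two facts about $\operatorname{Ad}(n)$ in a more global form.
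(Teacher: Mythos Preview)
Your proof is correct and follows essentially the same route as the paper: reduce right $N(K)$-invariance to the statement that $\operatorname{Ad}(n)$ (equivalently $\operatorname{Ad}(n^{-1})$) commutes with $\phi$, and verify this by observing that $\operatorname{Ad}(n)$ preserves both $\mathfrak{k}$ (from the normalizer condition) and its $\langle\cdot,\cdot\rangle_0$-orthogonal complement $\mathfrak{p}$. The only cosmetic difference is that the paper phrases the reduction as ``right multiplication is an isometry iff conjugation is,'' whereas you pass through the left-trivialization; your additional geometric remark about the induced isometry on $G\times K$ is a nice sanity check not present in the paper.
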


\

\begin{proof}

Let $n\in N(K)$.  Because left multiplication by $n$ is an isometry, right multiplication by $n$ is an isometry iff conjugation by $n$ is also an isometry.

For any $n\in N(K)$, $Ad_n:\mathfrak{g}\rightarrow \mathfrak{g}$ preserves $\mathfrak{k}$ because, for any curve $\gamma$ in $K$, $n\gamma(t)n^{-1}\in K$.  Because right multiplication by $n$ is an isometry with respect to the bi-invariant metric $\langle \cdot, \cdot \rangle_0$, we see that $Ad_n$ also preserves $\mathfrak{p}$.  Hence, $Ad_n(\phi(Y) = \phi(Ad_n(Y))$.

But then \begin{align*} \langle Ad_n(X), Ad_n(Y)\rangle_1 &= \langle Ad_n(X), \phi(Ad_n(Y))\rangle_0 \\ &=\langle Ad_n(X), Ad_n(\phi(Y))\rangle_0\\ &= \langle X,\phi(Y)\rangle_0\\ &= \langle X, Y\rangle_1.\end{align*}

\end{proof}

From O'Neill's formulas \cite{On1}, a zero-curvature plane in $(G, \langle \cdot, \cdot \rangle_1)$ must lift to a horizontal zero-curvature plane in $G\times K$.   In addition, according to Tapp \cite{Ta2}, a horizontal zero-curvature plane in $G\times K$ will always project to a zero-curvature plane in $G$. One can show (see, for example, \cite{Ke1}) that the lift of a vector $X \in \mathfrak{g}$ is given by $\left( \phi(X), -\frac{1}{t+1} X_{\mathfrak{k}}\right).$  Since the metric on $G\times K$ is a product of bi-invariant metrics, we get the following proposition.

\begin{proposition}\label{0curv} A tangent plane given by $\operatorname{span}\{X,Y\}\subseteq T_e G$ has zero curvature with respect to $\langle \cdot, \cdot\rangle_1$ iff $[\phi(X), \phi(Y)] = 0$ and $[X_\mathfrak{k}, Y_\mathfrak{k}] = 0$.

\end{proposition}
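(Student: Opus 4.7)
The plan is to unpack the Riemannian submersion structure $G \times K \to G \times_K K \cong G$ that defines $\langle \cdot, \cdot \rangle_1$, and combine it with the product structure on the total space $G \times K$ together with bi-invariance on each factor. The two facts cited just before the statement effectively reduce the question: the plane $\operatorname{span}\{X, Y\}$ has zero sectional curvature in $(G, \langle \cdot, \cdot \rangle_1)$ if and only if its horizontal lift is a zero-curvature plane in $G \times K$. (O'Neill's formula gives one direction, and Tapp's result in \cite{Ta2} gives the other; without Tapp the O'Neill $A$-tensor contribution would prevent an $\Leftarrow$ conclusion.) So it suffices to characterize when the plane spanned by $\tilde X = (\phi(X), -\tfrac{1}{t+1} X_\mathfrak{k})$ and $\tilde Y = (\phi(Y), -\tfrac{1}{t+1} Y_\mathfrak{k})$ has zero curvature in $G \times K$.

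For a product Riemannian metric, the curvature tensor splits: if we denote the unnormalized sectional curvature by $R(\cdot,\cdot,\cdot,\cdot)$, then
\begin{equation*}
R_{G\times K}(\tilde X, \tilde Y, \tilde Y, \tilde X) = R_G(\phi(X), \phi(Y), \phi(Y), \phi(X)) + \tfrac{t}{(t+1)^2}\, R_K(X_\mathfrak{k}, Y_\mathfrak{k}, Y_\mathfrak{k}, X_\mathfrak{k}).
\end{equation*}
Since bi-invariant metrics are non-negatively curved, both summands are non-negative, so the left side vanishes if and only if both summands vanish separately. Finally, invoking the standard bi-invariant curvature formula $R(X, Y, Y, X) = \tfrac14 \| [X,Y] \|^2$, the vanishing of each summand is equivalent to $[\phi(X), \phi(Y)] = 0$ and $[X_\mathfrak{k}, Y_\mathfrak{k}] = 0$, respectively. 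Combining these gives the proposition.

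There is no real obstacle here: each of the three ingredients (submersion/horizontal-lift characterization, product splitting of the curvature tensor, bi-invariant zero-curvature criterion) is standard. The only point requiring genuine care is the $\Leftarrow$ direction in the submersion step, where one must explicitly appeal to Tapp's theorem rather than O'Neill alone, since O'Neill's formula only bounds the base curvature from above by the horizontal curvature in the total space.
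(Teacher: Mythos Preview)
Your proof is correct and follows essentially the same approach as the paper: the paper's argument, sketched in the paragraph immediately preceding the proposition, likewise combines (i) O'Neill's formula together with Tapp's result \cite{Ta2} to reduce to the horizontal lift in $G\times K$, (ii) the explicit lift formula $\left(\phi(X),-\tfrac{1}{t+1}X_\mathfrak{k}\right)$, and (iii) the product structure of two non-negatively curved bi-invariant metrics to split the zero-curvature condition into the two bracket conditions. You have simply written out these steps in more detail (the precise scalar in front of the $K$-factor term is immaterial since only its positivity is used).
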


When Cheeger deformations have been used previously, the pair $(G,K)$ has always been symmetric, meaning $[\mathfrak{p},\mathfrak{p}]\subseteq \mathfrak{k}$.  This allows one to conclude, under the assumption $[\phi(X),\phi(Y)] = 0$, that ${[X_\mathfrak{k}, Y_{\mathfrak{k}}] = 0}$ iff $[X_\mathfrak{p}, Y_\mathfrak{p}] = 0$.  The pair $(\Gtwo, SU(3))$ is not symmetric, but nevertheless, the same conclusion holds.  To see this, we first need a lemma.

\begin{lemma}\label{almostsym}

Suppose $(G,K) = (\Gtwo, SU(3))$.  For $X,Y\in \mathfrak{p}\subseteq  \mathfrak{g}_2$, $[X, Y] = 0$ iff $[X, Y]_{\mathfrak{k}} = 0$.

\end{lemma}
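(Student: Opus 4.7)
The implication $[X,Y] = 0 \Rightarrow [X,Y]_{\mathfrak{k}} = 0$ is immediate, so the substance lies in the converse. My plan is to reduce the assertion to an explicit finite verification by exploiting the $\operatorname{Ad}$-action of $K = SU(3)$, which preserves $\mathfrak{k}$, $\mathfrak{p}$, and the bracket, so that both sides of the equivalence are $K$-equivariant in the pair $(X,Y)$.

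First I would normalize using the $K$-action. As a real representation of $SU(3)$, $\mathfrak{p}$ is the standard complex representation $\mathbb{C}^3$, and in particular $SU(3)$ is transitive on the unit $5$-sphere of $\mathfrak{p}$. Hence, after rescaling (and assuming $X \neq 0$), I may assume $X$ is a specific basis vector $E_1$ of $\mathfrak{p}$ coming from the six-parameter model in \eqref{ppart}. The stabilizer of $E_1$ in $SU(3)$ is a copy of $SU(2)$ which also fixes the ``$JE_1$''-direction (call it $E_2$), and which acts as the standard $\mathbb{C}^2$-representation on the orthogonal $4$-plane in $\mathfrak{p}$. Since $SU(2)$ is transitive on $S^3 \subset \mathbb{C}^2$, a further rotation lets me assume $Y = \alpha E_1 + \beta E_2 + \gamma E_3$ for real scalars $\alpha,\beta,\gamma$ and a chosen third basis vector $E_3$. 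Because $[E_1,E_1]=0$, it now suffices to verify the implication for $[X,Y] = \beta\,[E_1,E_2] + \gamma\,[E_1,E_3]$.

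The remaining work is an explicit $7\times 7$ matrix calculation using the form \eqref{ppart} and the projection formula \eqref{su3form}. Computing the two commutators and decomposing them as $[E_1,E_i] = [E_1,E_i]_{\mathfrak{k}} + [E_1,E_i]_{\mathfrak{p}}$, one finds that $[E_1,E_2]$ already lies entirely in $\mathfrak{k}$ (its first row vanishes, so its $\mathfrak{p}$-part is zero), while $[E_1,E_3]$ has non-zero components in both $\mathfrak{k}$ and $\mathfrak{p}$. The key qualitative feature is that the two $\mathfrak{k}$-projections $[E_1,E_2]_{\mathfrak{k}}$ and $[E_1,E_3]_{\mathfrak{k}}$ are linearly independent in $\mathfrak{k}$ (they lie in different weight spaces of the standard Cartan of $\mathfrak{su}(3)$). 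Hence $\beta\,[E_1,E_2]_{\mathfrak{k}} + \gamma\,[E_1,E_3]_{\mathfrak{k}} = 0$ forces $\beta=\gamma=0$, which in turn forces $[X,Y]=0$.

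The main obstacle is purely bookkeeping: one must identify which pairing among the six parameters of \eqref{ppart} corresponds to the $K$-invariant complex structure $J$ on $\mathfrak{p}$, so that the correct direction is chosen to play the role of $E_2 = JE_1$ (this fixes which pair from $\{y_1,y_2\},\{y_3,y_4\},\{y_5,y_6\}$ is an ``$(E,JE)$''-pair). Once this is pinned down by direct inspection of the $\operatorname{Ad}_K$-action on \eqref{ppart}, the remaining matrix multiplications and the $\mathfrak{k}$-projection are short and routine.
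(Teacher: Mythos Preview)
Your approach is correct and shares the paper's core idea: use the transitivity of $K=SU(3)$ on the unit sphere of $\mathfrak{p}\cong\mathbb{R}^6$ to put $X$ in the standard form $(x_1,0,\dots,0)$, then compute brackets explicitly. The difference is that you perform a \emph{second} normalization using the $SU(2)$-stabilizer of $E_1$ to reduce $Y$ to a three-parameter family $\alpha E_1+\beta E_2+\gamma E_3$, and then argue via linear independence of $[E_1,E_2]_{\mathfrak{k}}$ and $[E_1,E_3]_{\mathfrak{k}}$. The paper instead skips this second normalization entirely: after taking $Y=(0,y_2,\dots,y_6)$ perpendicular to $X$, it simply reads off the second row of $[X,Y]_{\mathfrak{k}}$ from \eqref{su3form}, obtaining $\begin{bmatrix}0&0&-4x_1y_2&-3x_1y_3&-3x_1y_4&-3x_1y_5&-3x_1y_6\end{bmatrix}$, which immediately forces $y_2=\cdots=y_6=0$. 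Your extra normalization is harmless but unnecessary; the direct five-parameter computation is shorter and avoids having to identify the complex structure $J$ on $\mathfrak{p}$ explicitly.
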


\begin{proof}  The forward implication is trivial, so we focus on the reverse direction.  We may also obviously assume $X\neq 0$ and that $Y$ is perpendicular to $X$.

Now, for $g\in K$, we note that $Ad_g([X,Y]_\mathfrak{k}) = [Ad_g X, Ad_g Y]_\mathfrak{k}$.  Since the adjoint action of $K$ on $\mathfrak{p}$ is equivalent to the standard action of $SU(3)$ on $\mathbb{R}^6$, and this action is transitive on spheres centered at the origin, we may assume without loss of generality that $X = (x_1,0,...,0)\in \mathfrak{p}$ with $x_1\neq 0$.  Here, we are following the convention mentioned after \eqref{pform}.  Since $Y$ is perpendicular to $X$, $Y = (0,y_2,...y_6)\in \mathfrak{p}$.

Computing $0=[X,Y]_\mathfrak{k}$ with the help of \eqref{su3form}, the second row is $$\begin{bmatrix}0 & 0 & -4x_1 y_2 & -3x_1y_3 & -3x_1y_4 & -3x_1 y_5 & -3x_1y_6\end{bmatrix}.$$  Since $x_1\neq 0$, this forces $Y = 0$.  Hence, $[X,Y] = 0$.
\end{proof}

\begin{proposition}\label{equiv}
For $(G,K) = (\Gtwo, SU(3))$, $[\phi(X), \phi(Y)]$ and $[X_\mathfrak{k}, Y_{\mathfrak{k}}]$ are both $0$ iff $[X,Y] $ and $[X_\mathfrak{p}, Y_\mathfrak{p}]$ are both $0$.

\end{proposition}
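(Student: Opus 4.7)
The plan is to expand both brackets into their $\mathfrak{k}$- and $\mathfrak{p}$-components using the reductive structure of $(\mathfrak{g}_2, \mathfrak{su}(3))$, and then use Lemma \ref{almostsym} to bridge the gap that the non-symmetric term $[\mathfrak{p},\mathfrak{p}]_\mathfrak{p}$ would otherwise create.

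Let $c = t/(t+1) \in (0,1)$, so that $\phi(Z) = c Z_\mathfrak{k} + Z_\mathfrak{p}$ for any $Z \in \mathfrak{g}$. Writing $X = X_\mathfrak{k} + X_\mathfrak{p}$ and $Y = Y_\mathfrak{k} + Y_\mathfrak{p}$ and using that the decomposition is reductive (so $[\mathfrak{k},\mathfrak{k}] \subseteq \mathfrak{k}$ and $[\mathfrak{k},\mathfrak{p}] \subseteq \mathfrak{p}$), one has
\begin{align*}
[\phi(X),\phi(Y)]_\mathfrak{k} &= c^2[X_\mathfrak{k},Y_\mathfrak{k}] + [X_\mathfrak{p},Y_\mathfrak{p}]_\mathfrak{k}, \\
[\phi(X),\phi(Y)]_\mathfrak{p} &= c\bigl([X_\mathfrak{k},Y_\mathfrak{p}] + [X_\mathfrak{p},Y_\mathfrak{k}]\bigr) + [X_\mathfrak{p},Y_\mathfrak{p}]_\mathfrak{p}.
\end{align*}
The exact same decomposition (with $c$ replaced by $1$) holds for $[X,Y]_\mathfrak{k}$ and $[X,Y]_\mathfrak{p}$.

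For the forward direction, assume $[\phi(X),\phi(Y)] = 0$ and $[X_\mathfrak{k},Y_\mathfrak{k}] = 0$. Then the $\mathfrak{k}$-component equation immediately yields $[X_\mathfrak{p},Y_\mathfrak{p}]_\mathfrak{k} = 0$. Here is where I invoke Lemma \ref{almostsym}: since $X_\mathfrak{p}, Y_\mathfrak{p} \in \mathfrak{p}$, the vanishing of their $\mathfrak{k}$-component forces the entire bracket $[X_\mathfrak{p}, Y_\mathfrak{p}]$ to vanish. Plugging this back into the $\mathfrak{p}$-component equation (and dividing by $c \neq 0$) gives $[X_\mathfrak{k},Y_\mathfrak{p}] + [X_\mathfrak{p},Y_\mathfrak{k}] = 0$. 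All four pieces of $[X,Y]$ then vanish, so $[X,Y] = 0$ and $[X_\mathfrak{p},Y_\mathfrak{p}] = 0$ as required.

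For the reverse direction, assume $[X,Y] = 0$ and $[X_\mathfrak{p},Y_\mathfrak{p}] = 0$. Projecting $[X,Y] = 0$ onto $\mathfrak{k}$ and using $[X_\mathfrak{p},Y_\mathfrak{p}]_\mathfrak{k} = 0$ gives $[X_\mathfrak{k},Y_\mathfrak{k}] = 0$; projecting onto $\mathfrak{p}$ gives $[X_\mathfrak{k},Y_\mathfrak{p}] + [X_\mathfrak{p},Y_\mathfrak{k}] = 0$. Substituting these into the formulas for $[\phi(X),\phi(Y)]_\mathfrak{k}$ and $[\phi(X),\phi(Y)]_\mathfrak{p}$ shows both vanish. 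The only real step requiring the non-symmetric nature of the pair is the appeal to Lemma \ref{almostsym}; everything else is a formal consequence of the reductive decomposition, and the proof itself amounts only to tracking $\mathfrak{k}$- and $\mathfrak{p}$-components carefully.
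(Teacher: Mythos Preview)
Your proof is correct and follows essentially the same route as the paper: expand $[\phi(X),\phi(Y)]$ using the reductive splitting, use $[\mathfrak{k},\mathfrak{p}]\subseteq\mathfrak{p}$ to isolate $[X_\mathfrak{p},Y_\mathfrak{p}]_\mathfrak{k}=0$, invoke Lemma~\ref{almostsym}, and then read off the remaining vanishing. Your write-up is slightly more explicit in separating the $\mathfrak{k}$- and $\mathfrak{p}$-components from the start, but the logic and the key appeal to Lemma~\ref{almostsym} are identical.
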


\begin{proof}
Assume $[\phi(X), \phi(Y)] = [X_\mathfrak{k}, Y_\mathfrak{k}] = 0$.  Expanding, we get $0 = [\phi(X),\phi(Y) ]_\mathfrak{k} = [X_\mathfrak{p}, Y_{\mathfrak{p}}]_\mathfrak{k}$ since $[\mathfrak{k}, \mathfrak{p}]\subseteq \mathfrak{p}$.  By Lemma \ref{almostsym}, $[X_\mathfrak{p}, Y_{\mathfrak{p}}] = 0$.  Thus, $[X_\mathfrak{k}, Y_\mathfrak{p}] + [X_\mathfrak{p}, Y_\mathfrak{k}] = 0$, from which it easily follows that $[X,Y] = 0$.

Conversely, if $[X,Y] = 0$ and $[X_\mathfrak{p}, Y_\mathfrak{p}] = 0$, then $0= [X,Y]_{\mathfrak{k}} = [X_\mathfrak{k}, Y_\mathfrak{k}]$, and then in a similar fashion as above, $[\phi(X),\phi(Y)] = 0$.

\end{proof}

We now turn attention to Wilking's doubling trick, which comes from the observation that the biquotient $\Delta G\backslash G\times G/ 1\times H$ is canonically diffeomorphic to the homogeneous space $G/H$ via the map $(g_1,g_2)\mapsto g_1^{-1}g_2$.  Here, we use the natural action of $\Delta G\times H$ on $G\times G$ given by $(g,h)\ast(g_1,g_2) = (gg_1, gg_2 h^{-1})$.  If we equip $G\times G$ with the product $\langle \cdot, \cdot \rangle_1 + \langle \cdot, \cdot \rangle_1$ of Cheeger metrics which are right $K$-invariant, then this action is isometric, so induces a new metric on $G/H$ which is in general inhomogeneous.

By O'Neill's formula, this new metric is non-negatively curved  as well, and again, \cite{Ta2} Tapp shows that a plane in $G/H$ has zero curvature iff its lift to $G\times G$ has zero curvature, so we can work on $G\times G$.

We first note that under the $\Delta G$ action on $G\times G$, every orbit contains a point of the form $(g, e)$.  As shown in \cite{Ke1}, the image $\mathcal{H}_g$ of the horizontal subspace at such a point, after translating to $(e,e)$ via left multiplication, consists of vectors of the form $$\widehat{X} =\left(\phi^{-1}(-Ad_{g^{-1}} X), \phi^{-1}(X)\right)$$ with $ \langle X,\mathfrak{h}\rangle_0=0.$

Since the metric on $G\times G$ is a product of non-negatively curved metrics, a plane $\operatorname{span}\{\widehat{X}, \widehat{Y}\}$ has zero curvature iff the two planes $$\operatorname{span}\left\{ \phi^{-1}(Ad_{g^{-1}}X), \phi^{-1}(Ad_{g^{-1}} Y)\right\} \text{ and } \operatorname{span}\left\{\phi^{-1}(X), \phi^{-1}(Y)\right\}$$ each have zero sectional curvature.  When $(G,K,H) = (\Gtwo, SU(3), U(2))$, we can combine Propositions \ref{0curv} and \ref{equiv} to find the following characterization of points in $\Delta G\backslash G\times G/1\times H$ at which there are zero-curvature planes.

\begin{theorem}\label{conditions}Suppose $(G,K,H) = (\Gtwo, SU(3), U(2))$.  Then, at a point $(g,e)\in G\times G$, there is a horizontal zero-curvature plane iff there are linearly independent vectors $X,Y\in \mathfrak{g}$ satisfying each of the following three conditions.

1.  $\langle X, \mathfrak{h}\rangle_0 = \langle Y,\mathfrak{h}\rangle_0 = 0$.

2.  $[X,Y] =[X_\mathfrak{k}, Y_\mathfrak{k}]= [X_\mathfrak{p}, Y_\mathfrak{p}] = 0$

3.  $[Ad_{g^{-1}} X, Ad_{g^{-1}} Y] = [(Ad_{g^{-1}} X)_\mathfrak{p}, (Ad_{g^{-1}} Y)_\mathfrak{p}]=0$

\end{theorem}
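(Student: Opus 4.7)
The plan is to decompose the zero-curvature condition through the two Riemannian submersions used to construct the metric on $G/H$. Tapp's theorem reduces the question to the existence of a horizontal zero-curvature plane in $(G \times G, \langle \cdot, \cdot \rangle_1 + \langle \cdot, \cdot \rangle_1)$ at the point $(g, e)$. The horizontal subspace recalled just before the statement is parameterized linearly by $X \in \mathfrak{g}$ satisfying $\langle X, \mathfrak{h}\rangle_0 = 0$ via $\widehat{X} = \left(\phi^{-1}(-Ad_{g^{-1}}X), \phi^{-1}(X)\right)$, and this parameterization is injective since $\phi^{-1}$ is. This immediately accounts for condition 1, and matches linear independence of $X, Y$ with that of $\widehat{X}, \widehat{Y}$.

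Next, since the metric on $G \times G$ is a product of non-negatively curved metrics, the plane $\operatorname{span}\{\widehat{X}, \widehat{Y}\}$ has zero sectional curvature if and only if each of its two projections to the factors has zero sectional curvature in $(G, \langle \cdot, \cdot \rangle_1)$. I would then apply Proposition \ref{0curv} to each projected plane. For the second factor, applied to $\phi^{-1}(X), \phi^{-1}(Y)$, the conditions become $[X, Y] = 0$ (since $\phi \circ \phi^{-1} = \operatorname{id}$) together with $[X_\mathfrak{k}, Y_\mathfrak{k}] = 0$ (because $\phi^{-1}$ acts as a positive scalar on $\mathfrak{k}$ and so does not affect vanishing of the bracket). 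For the first factor, the identical argument applied with $X, Y$ replaced by $Ad_{g^{-1}}X, Ad_{g^{-1}}Y$ yields $[Ad_{g^{-1}}X, Ad_{g^{-1}}Y] = 0$ and $[(Ad_{g^{-1}}X)_\mathfrak{k}, (Ad_{g^{-1}}Y)_\mathfrak{k}] = 0$.

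Finally I would convert these pairs of conditions into conditions 2 and 3 using Lemma \ref{almostsym}. Once $[X, Y] = 0$ and $[X_\mathfrak{k}, Y_\mathfrak{k}] = 0$ are established, taking $\mathfrak{k}$-components of $[X, Y] = 0$ and using $[\mathfrak{k}, \mathfrak{p}] \subseteq \mathfrak{p}$ yields $[X_\mathfrak{p}, Y_\mathfrak{p}]_\mathfrak{k} = 0$, and Lemma \ref{almostsym} then upgrades this to $[X_\mathfrak{p}, Y_\mathfrak{p}] = 0$, giving the full condition 2; the converse implication is immediate from the definitions. The same reasoning applied verbatim to the $Ad_{g^{-1}}$-conjugated vectors supplies condition 3. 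I expect no serious obstacle beyond careful bookkeeping of which bracket condition comes from which factor; the role of the $(\Gtwo, SU(3))$-specific Lemma \ref{almostsym} is precisely to make the listed $\mathfrak{p}$-bracket condition equivalent to, rather than strictly stronger than, the ambient and $\mathfrak{k}$-bracket conditions, which is what allows the statement to take its compact form.
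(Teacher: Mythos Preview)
Your proposal is correct and follows essentially the same route as the paper: the paper states the theorem as an immediate consequence of combining Proposition~\ref{0curv} with Proposition~\ref{equiv}, and your argument simply inlines the proof of Proposition~\ref{equiv} (via Lemma~\ref{almostsym}) rather than citing it as a black box. The bookkeeping you describe---applying Proposition~\ref{0curv} to $\phi^{-1}(X),\phi^{-1}(Y)$ in each factor and then using $[\mathfrak{k},\mathfrak{p}]\subseteq\mathfrak{p}$ together with Lemma~\ref{almostsym} to pass between the $\mathfrak{k}$- and $\mathfrak{p}$-bracket conditions---is exactly what the paper does.
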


In fact, since $Ad_{g^{-1}}$ is a Lie algebra isomorphism, the vanishing of the first bracket in Condition $3$ is equivalent to the condition $[X,Y] = 0$.  Also, it is clear that whether or not $X$ and $Y$ satisfy all three conditions only depends on $\operatorname{span}\{X,Y\}$.  We now have an easy corollary of Theorem \ref{conditions}.

\begin{corollary}\label{kpartppart}  The plane spanned by $X$ and $Y$ is spanned by two vectors $X'$ and $Y'$ where $X' = X'_\mathfrak{k}$ and $Y' = Y'_\mathfrak{p}$.
\end{corollary}

\begin{proof}

From Theorem \ref{conditions}, we know $[X_\mathfrak{k}, Y_{\mathfrak{k}}] = 0$.  Since $\langle X,\mathfrak{h}\rangle_0 = \langle Y,\mathfrak{h}\rangle_0 = 0$, we may interpret $X_\mathfrak{k}$ and $Y_\mathfrak{k}$ as tangent vectors in $K/H = SU(3)/U(2) = \mathbb{C}P^2$.  Since $\mathbb{C}P^2$ is positively curved, $[X_\mathfrak{k}, Y_\mathfrak{k}] = 0$ iff $X_{\mathfrak{k}}$ and $Y_\mathfrak{k}$ are linearly dependent.  Hence, by subtracting an appropriate multiple of $X$ from $Y$, we get a new vector $Y'$ with $Y'_{\mathfrak{k}} = 0$.  Note that $\operatorname{span}\{X, Y\} = \operatorname{span}\{X, Y'\}$, so $\operatorname{span}\{X, Y'\}$ satisfies all the conditions of Theorem \ref{conditions}.

\

Now, we also have the equation $[X_\mathfrak{p}, Y'_{\mathfrak{p}}] = 0$.  Interpreting these as tangent vectors on the positively curved $G/K = \Gtwo/SU(3) = S^6$, we see the bracket vanishes iff $X_\mathfrak{p}$ and $Y'_{\mathfrak{p}}$ are linearly dependent.  Then, subtracting an appropriate multiple of $Y'$ from $X$, we obtain a new vector $X'$ with $X'_{\mathfrak{p}} = 0$, and $\operatorname{span}\{X', Y'\} = \operatorname{span}\{X,Y\}$.

\end{proof}

Consider the action of $H'\times K$ on $G\times G$ given by $(h,k)\ast(g_1,g_2) = (g_1 k^{-1}, g_2 h^{-1})$.  This action is isometric since $\langle \cdot, \cdot\rangle_1$ is right $K$-invariant.  This action moves a point of the form $(g_1,e)\in G\times G$ to $(g_1 k^{-1}, h^{-1})$, which, after left multiplication by $(h,h)$, becomes $(hg_1 k^{-1} , e)$.  From Proposition \ref{niceform}, we now have the following corollary.

\begin{corollary}\label{niceform2}  Every point $[(g,e)]\in \Delta G\backslash G\times G/1\times H\cong G/H$ is isometrically equivalent to a point $[(g',e)]$ with $g'\in \mathcal{F}$ via the $H'\times K$ action.

\end{corollary}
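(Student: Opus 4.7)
My plan is to reduce this corollary to Proposition \ref{niceform} by invoking the isometric $H'\times K$-action on $(G\times G, \langle \cdot, \cdot \rangle_1 + \langle \cdot, \cdot \rangle_1)$ set up in the paragraph immediately above the corollary. Most of the substantive geometric work has already been done in Proposition \ref{niceform}, which produces an explicit normal form under the $H'\times K$-action on $G$; what remains is to convert that normal form statement into a statement about the double coset space $\Delta G\backslash G\times G/1\times H\cong G/H$.

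Concretely, given a representative $(g,e)\in G\times G$ of an arbitrary point in the quotient, I would first apply Proposition \ref{niceform} to produce $h\in H'$ and $k\in K$ with $hgk^{-1}\in \mathcal{F}$. Next I would apply the isometry $(h,k)\in H'\times K$ to $(g,e)$, obtaining the new point $(gk^{-1},h^{-1})\in G\times G$. Finally, I would left-translate by $(h,h)\in \Delta G$; since $\Delta G$ is part of the quotient group, this translation leaves the equivalence class unchanged while carrying the representative to $(hgk^{-1},e)$, whose first coordinate now lies in $\mathcal{F}$ by choice of $h$ and $k$. Composing these three moves exhibits an isometry of the quotient sending $[(g,e)]$ to $[(g',e)]$ with $g'\in\mathcal{F}$, as required.

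The main obstacle is not really a geometric one but rather verifying that the $H'\times K$-action on $G\times G$ descends to a bona fide isometric action on the double coset quotient. Commutativity with left $\Delta G$-translation is automatic (left versus right actions), so the action passes to $\Delta G\backslash G\times G\cong G$, where under the identification $[(g_1,g_2)]\mapsto g_1^{-1}g_2$ it becomes $g\mapsto kgh^{-1}$. For this to descend further past the right $1\times H$-quotient I would need $H'\subseteq N(H)$, which holds automatically because $H$ has index two in $H'$. The metric invariance on the $G$-factor is supplied by Proposition \ref{isomgroup}, provided one checks the inclusion $H'\subseteq N(K)$; this last inclusion follows from the description $K=\{g\in G: g(i)=i\}$ together with the fact that $\sigma$ sends $i$ to $-i$. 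Apart from this bookkeeping, the corollary is essentially immediate from Proposition \ref{niceform}.
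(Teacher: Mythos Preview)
Your proposal is correct and follows essentially the same route as the paper: the paragraph preceding the corollary already records that the isometric $H'\times K$-action carries $(g,e)$ to $(gk^{-1},h^{-1})$, which after left translation by $(h,h)\in\Delta G$ becomes $(hgk^{-1},e)$, so the corollary is immediate from Proposition~\ref{niceform}. Your additional bookkeeping---checking $H'\subseteq N(H)$ so the action descends, and $H'\subseteq N(K)$ so right $H'$-multiplication is isometric via Proposition~\ref{isomgroup}---is a welcome elaboration of the paper's terse claim that the action is isometric ``since $\langle\cdot,\cdot\rangle_1$ is right $K$-invariant,'' which strictly speaking only covers the first factor.
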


The following proposition allows us to restrict attention to $\mathcal{F}$.  

\begin{proposition}\label{finalprop} Suppose the set of points in $\mathcal{F}$ with no zero-curvature planes with respect to $\langle \cdot, \cdot\rangle_2$ is dense in $\mathcal{F}$.  Then $G/H$ has almost positive curvature.

\end{proposition}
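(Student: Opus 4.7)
The plan is to combine openness of the zero-curvature-free locus in $G/H$ with a density argument that propagates density from $\mathcal{F}$ to all of $G/H$ via the isometric $H'\times K$ action. The first step is to observe that the set $Z\subseteq G/H$ of points admitting at least one zero-curvature plane is closed: the Grassmannian bundle $\pi: Gr_2(T(G/H)) \to G/H$ is proper (compact base with compact fibers), the preimage $\pi^{-1}(Z)$ is the closed zero set of the continuous sectional curvature function, and a proper continuous map between Hausdorff spaces is closed. Thus $S := (G/H)\setminus Z$ is open, and almost positive curvature will follow once $S$ is shown to be dense.

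The next step is to record that the $H'\times K$ action on $G\times G$ descends to an isometric action on the biquotient $\Delta G\backslash G\times G / 1\times H \cong G/H$, carrying the class of $(g_1,e)$ to the class of $(hg_1 k^{-1}, e)$ as computed in the paragraph preceding Corollary \ref{niceform2}. The isometry property follows from Proposition \ref{isomgroup} together with right $K$-invariance of $\langle \cdot,\cdot\rangle_1$. Since isometries preserve sectional curvatures of $2$-planes, $S$ is $H'\times K$-invariant.

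For the density step, take any $p\in G/H$. By Corollary \ref{niceform2} there exist $g\in \mathcal{F}$ and $(h,k)\in H'\times K$ with $p = (h,k)\ast [(g,e)]$. The hypothesis supplies a sequence $g_n\in \mathcal{F}$ with $g_n\to g$ and $[(g_n,e)]\in S$. Continuity of the map $\mathcal{F}\hookrightarrow G \to G/H$ sending $g'\mapsto [(g',e)]$, followed by continuity of the group action, yields $(h,k)\ast [(g_n,e)] \to p$; invariance of $S$ under $H'\times K$ ensures every term lies in $S$, so $p\in\overline{S}$. Combined with openness from the first step, this gives almost positive curvature.

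The argument is essentially formal: the real difficulty has been pushed entirely into the hypothesis, which is the subject of Section \ref{apc}. The only point requiring a small amount of care is verifying that the $H'\times K$ action genuinely descends to isometries of the biquotient preserving sectional curvatures, but this is already built into the construction of $\langle \cdot,\cdot\rangle_2$ via the Cheeger-plus-doubling framework and the normalizer computation in Proposition \ref{isomgroup}.
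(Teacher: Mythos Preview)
Your proof is correct and follows essentially the same approach as the paper: both arguments use openness of the positively curved locus together with Corollary \ref{niceform2} to transport the density hypothesis on $\mathcal{F}$ to all of $G/H$ via the isometric $H'\times K$ action. The only differences are cosmetic --- you phrase density via sequences rather than open sets and spell out the closedness of $Z$ more explicitly --- but the underlying idea is identical.
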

\begin{proof}

Since having positive curvature is an open condition, we need only show the set of points in $G/U$ with no zero-curvature planes is dense.  So, let $V\subseteq \Delta G\backslash G\times G/1\times H$ be a non-empty open set and suppose $[(g,e)]\in V$.  We need to find a point in $V$ with no zero-curvature planes.

By Corollary \ref{niceform2}, there is an isometry $f$ of $\Delta G\backslash G\times G/1\times H$ for which $f([(g,e)]) = [(g',e)]$ with $g'\in \mathcal{F}$.  Then $f(V)\cap \mathcal{F}$ is a neighborhood of $[(g',e)]$ in $\mathcal{F}$, so by assumption, there is a point $[(g'',e)] \in f(V)\cap \mathcal{F}$ with no zero-curvature planes.  Then, $f^{-1}[(g'',e)]\in V$ has no zero-curvature planes.

\end{proof}

\section{Almost Positive Curvature}\label{apc}

In this section, we complete the proofs of Theorems \ref{main} and \ref{main2}, relying on Theorem \ref{conditions} and Corollary \ref{niceform2}.  We note that $G/H$ is locally isometric to $G/H'$, so it is enough to show that $G/H$ is almost positively curved.

We work at a point $(g,e)\in G\times G$ with $g\in \mathcal{F}$, assuming that $[(g,e)]$ has at least one zero-curvature plane.  Thus, by Theorem \ref{conditions}, there are linearly independent $X,Y\in \mathfrak{g}$ which satisfy all the conditions of Theorem \ref{conditions}.  From Corollary \ref{kpartppart}, we can assume without loss of generality that $X = X_\mathfrak{k}$ and $Y = Y_\mathfrak{p}$.  Following the convention after \eqref{pform}, we write $Y = (y_1,...,y_6)$.  Now, $\langle Y,\mathfrak{h}\rangle_0 = 0$ automatically, but for $X$, the condition $\langle X,\mathfrak{h}\rangle_0 = 0$ forces $X$ to have the following form.
$$\begin{bmatrix}
0&0&0&0&0&0&0 \\ 
0&0&0& x_1 & -x_2 & x_3 & -x_4 \\ 
0&0&0& x_2 & x_1 & -x_4 & -x_3 \\ 
0& -x_1 & -x_2 &0&0&0&0 \\ 
0& x_2 & -x_1 &0&0&0&0 \\ 
0& -x_3 & x_4 &0&0&0&0 \\ 
0& x_4 & x_3 &0&0&0&0 
\end{bmatrix}$$ 

As $Y = Y_\mathfrak{p}$, it follows that $[X,Y] = [X,Y]_\mathfrak{p}$, so that $[X,Y] =0$ iff the first row of $[X,Y]$ is $0$.  We again recall the identification of $\mathfrak{p}$ with $\mathbb{R}^6$ mentioned after \eqref{pform}, though we will henceforth write elements in $\mathfrak{p}$ as column vectors for convenience.  With respect to this identification, a direct calculation now shows that $[X,Y]$ is given by \begin{equation}\label{bracket0}\frac{1}{2}\begin{bmatrix}x_1 y_3 - x_2 y_4 +x_3y_5 - x_4 y_6\\ x_1 y_4 +x_2 y_3 - x_3 y_6 - x_4 y_5\\ -x_1 y_1 -x_2 y_2\\ -x_1 y_2 + x_2 y_1\\ -x_3 y_1 + x_4y_2 \\ x_3 y_2 + x_4 y_1 \end{bmatrix}.\end{equation}

Focusing on the third and fourth entries, we view these as a linear system with variable $y_1$ and $y_2$ and coefficients given in terms of $x_1$ and $x_2$.  Then the coefficient matrix has determinant $\frac{1}{4}(x_1^2 + x_2^2)$.  Likewise, focusing on the last two entries, we see the corresponding coefficient matrix has determinant $-\frac{1}{4}(x_3^2 + x_4^2)$.  Since $X\neq 0$, at least one of the $x_i$ is non-zero.  Hence, the condition $[X,Y] = 0$ forces $y_1 = y_2 = 0$, that is, $$Y = \begin{bmatrix}0 & 0 & 0 & 2y_3 & 2y_4 & 2y_5 & 2y_6 \\ 0 & 0 & 0 & -y_6 & y_5 & -y_4 & y_3\\ 0 & 0 & 0 & y_5 & y_6 & -y_3 & -y_4 \\ -2y_3 & y_6 & -y_5 & 0 & 0 & 0 & 0\\ -2y_4 & -y_5 & -y_6 & 0 & 0 & 0 & 0\\ -2y_5 & y_4 & y_3 & 0 & 0 &0 & 0 \\ -2y_6 & -y_3 & y_4 & 0 & 0 & 0 & 0 \end{bmatrix}.$$

\

Now, we consider condition $3$ of Theorem \ref{conditions}, ${[(Ad_{g^{-1}} X)_{\mathfrak{p}}, (Ad_{g^{-1}} Y)_{\mathfrak{p}}] = 0}$.  Since $G/K = S^6$ is positively curved, this condition holds iff one of $(Ad_{g^{-1}} X)_{\mathfrak{p}}$ or $(Ad_{g^{-1}} Y)_{\mathfrak{p}}$ is $0$, or if one is a non-zero multiple of the other.  Recall that we are assuming $g\in\mathcal{F}$, so $(Ad_{g^{-1}} X)_{\mathfrak{p}}$ is given by the column vector
$$\frac{1}{2}\begin{bmatrix}
0 \\ 
2x_2\sin\phi\cos\phi\sin\theta \\
-x_1\sin\theta \\
x_2(2\cos^2\phi-1)\sin\theta\cos\theta+x_3\cos\phi\sin^2\theta \\
x_2(2\cos^2\phi-1)\sin^2\theta - x_3\cos\phi\sin\theta\cos\theta \\
x_4\cos\phi\sin\theta
\end{bmatrix}.$$
Similarly, $(Ad_{g^{-1}} Y)_{\mathfrak{p}}$ is given by
$$\frac{1}{2} \begin{bmatrix}
2y_3\sin\phi \\ 
2y_4\sin\phi\cos\theta \\
2y_3\cos\phi\cos\theta + y_6\sin\theta \\
y_4\cos\phi(3\cos^2\theta-1) - 3y_5\sin\theta\cos\theta \\
3y_4\cos\phi\sin\theta\cos\theta + y_5(3\cos^2\theta-1)  \\
-y_3\cos\phi\sin\theta + 2y_6\cos\theta
\end{bmatrix}.$$

\

We will initially assume $\theta \in (0,\pi/2)$ and $\phi\in [0,\pi/2)$; we will address the remaining end points later in this section. It is clear from the above expressions for $(Ad_{g^{-1}} X)_\mathfrak{p} $ and $(Ad_{g^{-1}} Y)_\mathfrak{p}$ that these vectors are $0$ iff $X$ or $Y$ are $0$.  Since $X$ and $Y$ are linearly independent, this is a contradiction.  Therefore, we may assume one is a non-zero multiple of the other. Since we can scale these vectors without changing the plane they span, we may assume $(Ad_{g^{-1}} X)_\mathfrak{p} = (Ad_{g^{-1}} Y)_\mathfrak{p}$.

For now, assume $\phi > 0$.  By comparing the entries in the first row, it is clear that $y_3$ must equal 0. We can then solve the equations formed by rows three, two, and six for $x_1$, $x_2$, and $x_4$ respectively to obtain the following:
$$x_1 = -y_6 \hspace{3em} x_2 = \frac{y_4\cos\theta}{\cos\phi\sin\theta} \hspace{3em} x_4 = \frac{2y_6\cos\theta}{\cos\phi\sin\theta}.$$
Substituting these values into the equations formed by row four and row five, we can solve for $x_3$ in two ways.  Thus, $$x_3 = \frac{y_4(\cos^2\theta - \cos^2\phi \sin^2\theta) - 3y_5(\cos\phi\cos\theta\sin\theta)}{\cos^2\phi \sin^2\theta}$$ and $$x_3 = \frac{-y_4\cos\theta\sin\theta(\cos^2\phi +1) + y_5 \cos\phi(1-3\cos^2\theta)}{\cos^2\phi \cos\theta\sin\theta}.$$ Then by setting the right hand sides of these two equations equal to each other, we see
$$y_4 = \frac{y_5\cos\phi\sin\theta}{\cos\theta}.$$

Now we return to the condition $[X,Y]=0$ given in \eqref{bracket0}.  After making all of the substitutions above, we see all entries of $[X,Y]$ are automatically $0$ except for the first.  Thus, a zero-curvature planes exist iff the following equation holds,
$$\frac{-4(y_5^2\sin^2\phi+y_6^2)-4y_5^2\cos^2\phi}{\cos\phi\cos\theta\sin\theta}=0.$$ 
Clearly this can only occur if $y_5=y_6=0$, causing $Y$ to be identically zero. This is a contradiction of the independence of $X$ and $Y$. Thus, when $\theta,\phi \in (0,\pi/2)$ we have no zero-curvature planes, implying positive curvature. This set of points is dense in $\mathcal{F}$, and therefore by Proposition \ref{finalprop}, $G/H$ has almost positive curvature completing the proof of Theorem \ref{main}.

\

If $\phi = 0$, the first row of the equation $(Ad_{g^{-1}} X)_\mathfrak{p} = (Ad_{g^{-1}} Y)_\mathfrak{p}$ no longer provides information about $y_3$.  Instead, we proceed by solving the equations formed by rows $3,4,5,$ and $6$ for the $x_i$ variables.  This gives $$\begin{bmatrix} x_1 \\ x_2\\ x_3\\ x_4\end{bmatrix} = \frac{1}{\sin\theta} \begin{bmatrix} -2y_3\cos \theta - y_6\sin\theta\\ 2y_4\cos\theta -y_5\sin\theta\\ - y_4\sin\theta-2y_5\cos\theta \\ - y_3\sin\theta+2y_6\cos\theta  \end{bmatrix}.$$

Then, making these substitutions into \eqref{bracket0}, we see that $[X,Y] =  0$ iff $$\frac{-4\cos\theta(y_3^2+y_4^2+y_5^2+y_6^2)}{\sin\theta} = 0.$$  This implies $Y = 0$, so $X$ and $Y$ are not independent.  This contradiction proves that we have positive curvature when $\phi =  0$ and $\theta\in(0,\pi/2)$.

\

Now we prove Theorem \ref{main2} by focusing on the remaining cases, where $\theta \in\{0,\pi/2\}$ or $\phi = \pi/2$.  Notice that when $\theta=0$, then $(Ad_{g^{-1}} X)_{\mathfrak{p}} = 0$, and we have zero-curvature planes obtained by, e.g., setting $x_1,x_2,y_5,$ and $ y_6$ equal to $0$.  When $\theta= \pi/2$ it is easy to see that making the substitutions $x_2=y_3=x_4=y_5=0$, $x_1=-y_6$, and $x_3=-y_4$ satisfies all the conditions in Theorem \ref{conditions}.  Hence, there are zero-curvature planes at these points as well.

For the case when $\phi = \pi/2$, by setting $x_1=x_2 = 0$, we make $(Ad_{g^{-1}} X)_{\mathfrak{p}}=0$.  Then one easily sees that $[X,Y]=0$ if $y_5 = y_6 = 0$.  Then any non-zero choice of $x_3, x_4, y_3$, and $y_4$ gives a zero-curvature plane.

In summary, we have shown the following theorem.

\begin{theorem}\label{0curvdesc}

A point $(g,e)\in G\times G$ with $g = (g)_{ij}\in \mathcal{F}$ projects to a point having at least one zero-curvature plane iff $\theta = 0$, $\theta = \pi/2$, or $\phi = \pi/2$.

\end{theorem}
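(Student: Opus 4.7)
The plan is to exploit the machinery already assembled: Corollary \ref{niceform2} lets me restrict attention to $g\in\mathcal{F}$, Theorem \ref{conditions} characterizes zero-curvature planes by three algebraic conditions on a pair $X,Y\in\mathfrak{g}$, and the preceding Proposition further reduces us to the case $X=X_{\mathfrak{k}}$ and $Y=Y_{\mathfrak{p}}$. The question then becomes: for which $(\theta,\phi)\in[0,\pi/2]^2$ parametrizing \eqref{matrix} does there exist a linearly independent pair $(X,Y)$ of this form satisfying conditions $2$ and $3$ of Theorem \ref{conditions}?

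First I would impose $\langle X,\mathfrak{h}\rangle_0=0$ on the $\mathfrak{k}$-valued matrix $X$ to obtain its four-parameter shape in $(x_1,x_2,x_3,x_4)$, and parametrize $Y\in\mathfrak{p}$ by $(y_1,\ldots,y_6)$ via the identification introduced after \eqref{ppart}. The equality $[X,Y]=[X,Y]_{\mathfrak{p}}$ (since $[\mathfrak{k},\mathfrak{p}]\subseteq\mathfrak{p}$) lets me compute the bracket entrywise in $\mathfrak{p}\cong\mathbb{R}^6$. Two of the rows form a linear system in $(y_1,y_2)$ whose coefficient matrix has determinant $-(x_1^2+x_2^2)$ and another pair has determinant $-(x_3^2+x_4^2)$; since $X\neq 0$, at least one of these is nonzero, forcing $y_1=y_2=0$. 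This is the preparatory step that cleans up condition $2$.

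Next I turn to condition $3$. Since $G/K=S^6$ is positively curved, $[(\mathrm{Ad}_{g^{-1}}X)_{\mathfrak{p}},(\mathrm{Ad}_{g^{-1}}Y)_{\mathfrak{p}}]=0$ is equivalent to one vector being a scalar multiple of the other (or one vanishing). The calculational heart is to write out these two projections explicitly by conjugating with the matrix \eqref{matrix} and then applying \eqref{su3form} to extract the $\mathfrak{p}$-component, yielding the two $6$-column vectors displayed in the text. For $\theta,\phi\in(0,\pi/2)$ inspection of the first entries shows neither vector vanishes unless $X$ or $Y$ does, so after rescaling I may set $(\mathrm{Ad}_{g^{-1}}X)_{\mathfrak{p}}=(\mathrm{Ad}_{g^{-1}}Y)_{\mathfrak{p}}$ and compare coordinates: the first component forces $y_3=0$, rows three, two and six solve for $x_1,x_2,x_4$ in terms of $y_4,y_5,y_6$, and the two expressions for $x_3$ from rows four and five yield $y_4=y_5\cos\phi\sin\theta/\cos\theta$. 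Feeding all of this back into $[X,Y]=0$ produces a single equation whose numerator is a positive-definite quadratic in $y_5,y_6$, forcing $Y=0$, contradicting independence. Hence no zero-curvature plane exists in the interior.

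For the four boundary segments I would proceed by explicit construction rather than contradiction. At $\theta=0$ the matrix $(\mathrm{Ad}_{g^{-1}}X)_{\mathfrak{p}}$ vanishes identically, so any admissible pair with $Y\ne 0$ furnishes a zero-curvature plane; at $\theta=\pi/2$ the ansatz $x_2=y_3=x_4=y_5=0$, $x_1=-y_6$, $x_3=-y_4$ is easily checked to satisfy all three conditions; at $\phi=\pi/2$ setting $x_1=x_2=0$ annihilates $(\mathrm{Ad}_{g^{-1}}X)_{\mathfrak{p}}$, after which $y_5=y_6=0$ kills $[X,Y]$, leaving a nontrivial plane. The edge $\phi=0$ with $\theta\in(0,\pi/2)$ is more subtle: the same substitution chain as in the interior case collapses to $-4\cos\theta(y_3^2+y_4^2+y_5^2+y_6^2)/\sin\theta=0$, again forcing $Y=0$, so there is no zero-curvature plane. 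Combining the four boundary verdicts with the interior result gives exactly the three values $\theta=0$, $\theta=\pi/2$, $\phi=\pi/2$ claimed in the theorem. The main obstacle is the Adjoint-and-project computation producing the two explicit $6$-vectors; once they are in hand, the rest is disciplined algebraic bookkeeping.
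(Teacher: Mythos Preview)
Your proposal is essentially identical to the paper's own argument: the same reduction to $X=X_{\mathfrak{k}}$, $Y=Y_{\mathfrak{p}}$, the same determinant computation forcing $y_1=y_2=0$, the same explicit $\mathfrak{p}$-projections of $\mathrm{Ad}_{g^{-1}}X$ and $\mathrm{Ad}_{g^{-1}}Y$, the same chain of substitutions in the interior case, and the same treatment of the four boundary segments. One small imprecision: at $\theta=0$ you write that ``any admissible pair with $Y\ne 0$ furnishes a zero-curvature plane,'' but condition $2$ (namely $[X,Y]=0$) is not automatic, so you still need to exhibit a specific choice (the paper takes $x_1=x_2=y_5=y_6=0$); otherwise the match is exact.
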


From the discussion following the proof of Proposition \ref{niceform}, the points $(g,e)\in G\times G$ which project to zero-curvature planes have either $g_{11} = 0$ or $g_{21} = g_{31} = 0$.  Recall the diffeomorphism $\Delta G \backslash G\times G/1\times H\cong G/H$ induced from the map $G\times G\rightarrow G$ with $(g_1,g_2)\mapsto g_1^{-1} g_2$.  Under this diffeomorphism, we see that the points in $G$ which project to zero-curvature planes in $G/H$ all have $g_{11} = 0$ or $g_{12} = g_{13} = 0$.  Thus, we have proved Theorem \ref{main2}.

\section{The Topology of the Zero-Curvature Points}\label{top}

In this section, we investigate the topology of the set of points in $\Gtwo/U(2)$ which have at least one zero-curvature plane with respect to $\langle \cdot,\cdot \rangle_2$.  

We recall $Z_1 = \{g\in \Gtwo: g_{12} = g_{13} = 0\}$ and  ${Z_2 = \{g\in \Gtwo: g_{11} = 0\}}.$  By Theorem \ref{main2} a point $g\in G = \Gtwo$ projects to a point with at least one zero-curvature plane iff $g\in Z_1\cup Z_2$.  For $g\in G$, we will use the notation $\overline{g}$ to denote its image in $\Gtwo/U(2) = G/H$.

We begin with an alternative proof to that found in \cite{Ker1}, showing that $G/H$ is diffeomorphic to $Gr_2\!\left(\mathbb{R}^7\right)$.  Recall that $g_{\bullet k}$ refers to the $k$th column of $g\in G$.

\begin{proposition} The map $\psi_H:G\rightarrow Gr_2\!\left(\mathbb{R}^7\right)$ which sends a matrix $g\in G$ to the plane with ordered orthonormal basis $\{g_{\bullet 2}, g_{\bullet 3}\}$ descends to a diffeomorphism $G/H\rightarrow Gr_2\!\left(\mathbb{R}^7\right)$.  Further, $\psi_H$ maps $\overline{Z}_1$ diffeomorphically onto $Gr_2\!\left(\mathbb{R}^6\right)$, where $\mathbb{R}^6 \cong i^\bot\subseteq \Imo.$

\end{proposition}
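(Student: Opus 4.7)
The plan is to prove both claims together by first establishing the diffeomorphism $G/H \cong Gr_2\!\left(\mathbb{R}^7\right)$ and then identifying $\overline{Z}_1$ as the preimage of $Gr_2\!\left(\mathbb{R}^6\right)$. The first step is to verify that $\psi_H$ descends to the quotient: for $h \in H$, the matrix $h$ acts on $\operatorname{span}(j,k)$ by an element of $SO(2)$ (this is precisely the defining property of $H$ recalled just before the proposition), so $(gh(j), gh(k))$ is the $SO(2)$-rotation of $(g(j), g(k))$ by the same angle, and these two ordered pairs span the same oriented plane, so $\psi_H(gh) = \psi_H(g)$.

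Next, I would establish that the induced map $\overline{\psi}_H : G/H \to Gr_2\!\left(\mathbb{R}^7\right)$ is bijective. For surjectivity, given any oriented orthonormal pair $(v_1, v_2)$ in $\Imo$, set $v_3 = v_1 v_2$, a unit imaginary octonion orthogonal to both $v_1$ and $v_2$ by alternativity, and choose any unit $v_4$ in the orthogonal complement of $\operatorname{span}(v_1, v_2, v_3)$. A computation inside the associative subalgebra generated by $v_1$ and $v_2$ gives $v_3 v_1 = v_2$, which is orthogonal to $v_4$, so Theorem \ref{G2desc} produces a unique $h \in G$ with $h(v_3) = i$, $h(v_1) = j$, $h(v_4) = \ell$. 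Automatically $h(v_2) = h(v_3)h(v_1) = ij = k$, so $g := h^{-1}$ satisfies $g(j) = v_1$ and $g(k) = v_2$, showing $\psi_H(g)$ is the given oriented plane. Injectivity is immediate: if $\psi_H(g_1) = \psi_H(g_2)$, then $(g_1(j), g_1(k))$ and $(g_2(j), g_2(k))$ are two oriented orthonormal bases of the same plane, so $g_2^{-1} g_1$ preserves the oriented $jk$-plane, hence lies in $H$.

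The main obstacle to promoting the bijection to a diffeomorphism is that a smooth bijection need not have a smooth inverse. I would circumvent this via $G$-equivariance: $\overline{\psi}_H$ is smooth and intertwines the left $G$-action on $G/H$ with the natural $G$-action on $Gr_2\!\left(\mathbb{R}^7\right)$ coming from $G \subseteq SO(7)$. Both actions are transitive (the latter by the surjectivity just established), so $\overline{\psi}_H$ has constant rank; since it is a bijection between compact manifolds of the same dimension $10 = 14 - 4 = 2 \cdot 5$, the rank must be full, and $\overline{\psi}_H$ is a local diffeomorphism, hence a diffeomorphism.

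For the second statement, observe that $g_{12}$ and $g_{13}$ are the $i$-components of $g(j) = g_{\bullet 2}$ and $g(k) = g_{\bullet 3}$ with respect to the ordered basis $\{i, j, k, \ell, i\ell, j\ell, k\ell\}$. Hence $g \in Z_1$ if and only if $\psi_H(g) \subseteq i^\perp$, that is, $\overline{\psi}_H(\overline{g}) \in Gr_2\!\left(\mathbb{R}^6\right)$ under the identification $\mathbb{R}^6 \cong i^\perp$. Therefore $\overline{\psi}_H^{-1}\!\left(Gr_2\!\left(\mathbb{R}^6\right)\right) = \overline{Z}_1$, and since $Gr_2\!\left(\mathbb{R}^6\right) \hookrightarrow Gr_2\!\left(\mathbb{R}^7\right)$ is an embedded submanifold, its preimage $\overline{Z}_1$ under the diffeomorphism $\overline{\psi}_H$ is an embedded submanifold of $G/H$, and the restriction $\overline{\psi}_H|_{\overline{Z}_1}: \overline{Z}_1 \to Gr_2\!\left(\mathbb{R}^6\right)$ is a diffeomorphism.
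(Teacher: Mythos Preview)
Your proof is correct and follows essentially the same approach as the paper: both establish surjectivity via Theorem~\ref{G2desc}, verify that the fibers are precisely the $H$-cosets, and identify $\overline{Z}_1$ as the set of cosets whose associated plane lies in $i^\perp$. Your treatment is in fact more careful than the paper's in two respects: you explicitly justify that the smooth bijection is a diffeomorphism (via $G$-equivariance and constant rank), which the paper omits, and your surjectivity argument spells out how to apply Theorem~\ref{G2desc} (constructing $h$ and passing to $g=h^{-1}$) rather than invoking it informally.
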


\begin{proof}

By Theorem \ref{G2desc}, $\psi_H$ is surjective, so we need only show that it descends to an injective map $G/H \rightarrow Gr_2\!\left(\mathbb{R}^7\right)$.

Let $P$ denote the oriented plane with oriented basis $\{j,k\}$.  Then $\psi_H(g) = \psi_H(g')$ iff $gP = g'P$ which holds  iff $g^{-1}g'P = P$.  Thus, $\psi_H(g) = \psi_H(g')$ iff $g^{-1}g'\in H$, that is, iff $gH =g'H$.

\

Finally, we show that $\psi_H$ restricts to a surjective map from $Z_1$ to $Gr_2\!\left(\mathbb{R}^6\right)$.  First note that for $g\in Z_1$, $g_{12} = g_{13} = 0$, so the columns $g_{\bullet 2}$ and $g_{\bullet 3}$ are both perpendicular to $i$.  It follows that $\psi_H(g)\subseteq Gr_2\!\left(\mathbb{R}^6\right)$.

On the other hand, given a $2$-plane $Q\in Gr_2\!\left(\mathbb{R}^6\right)$, choose an oriented orthonormal basis $\{q_2, q_3\}$ for it.  By Theorem \ref{G2desc}, there is a unique matrix $g\in G$ with $g_{\bullet 2} = q_2$, $g_{\bullet 3} = q_3$, and $g_{\bullet 4}  =(1,0,...,0)^t$.  Clearly $\psi_H(g) = Q$.  Additionally, since $q_2, q_3 \bot i$, it follows that $g_{12} = g_{13} = 0$, so $g\in Z_1$.

\end{proof}

In a similar fashion, the map $\psi_K$ which sends $g\in G$  to $g_{\bullet 1}\in S^6$ descends to a diffeomorphism $G/K\cong S^6$.  In fact, since $g\in G$ implies $g_{\bullet 2} g_{\bullet 3} = g_{\bullet 1}$, the maps $\psi_H$ and $\psi_K$ give a bundle isomorphism \begin{diagram}[LaTeXeqno]\label{bund}K/H &\rTo&  G/H & \rTo & G/K \\  \dTo & & \dTo^{\psi_H} & & \dTo^{\psi_K}\\ \mathbb{C}P^2 &\rTo& Gr_2\!\left(\mathbb{R}^7\right)& \rTo^{\pi} & S^6\end{diagram} where the projection $\pi:Gr_2\!\left(\mathbb{R}^7\right)\rightarrow S^6$ maps a plane with oriented orthonormal basis $\{g_{\bullet 2}, g_{\bullet 3}\}$ to $g_{\bullet 2} g_{\bullet 3}$.

We can now prove the following.

\begin{proposition}

The subspace $\overline{Z}_2$ of $G/H$ is diffeomorphic to $\mathbb{C}P^2\times S^5$.

\end{proposition}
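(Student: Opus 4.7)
The plan is to realize $\overline{Z}_2$ as the restriction of the $\mathbb{C}P^2$-bundle $\pi : Gr_2\!\left(\mathbb{R}^7\right) \to S^6$ from diagram \eqref{bund} over an equatorial $S^5 \subset S^6$, and then to observe that this restricted bundle is trivial because the inclusion is nullhomotopic.

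First I would pin down the image $\psi_H(\overline{Z}_2)$. For $g \in G$, since $g_{\bullet 3} = g_{\bullet 1}\,g_{\bullet 2}$ with $g_{\bullet 1}$ and $g_{\bullet 2}$ orthonormal imaginary units, the associative subalgebra they generate, which is isomorphic to $\mathbb{H}$, yields
$$g_{\bullet 2}\,g_{\bullet 3} \;=\; g_{\bullet 2}(g_{\bullet 1}\,g_{\bullet 2}) \;=\; g_{\bullet 1},$$
by the quaternionic identity $j(ij)=i$. Thus the composition $\pi \circ \psi_H : G/H \to S^6$ is simply $\overline{g} \mapsto g_{\bullet 1}$, and the condition $g_{11}=0$ defining $Z_2$ is equivalent to $g_{\bullet 1} \in i^\perp$, i.e., $g_{\bullet 1}$ lies in the equatorial $5$-sphere $S^5 := \{v \in S^6 \subset \Imo : v \perp i\}$. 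Hence $\psi_H(\overline{Z}_2)=\pi^{-1}(S^5)$, which is the total space of the $\mathbb{C}P^2$-bundle obtained by pulling back $\pi$ along the inclusion $\iota : S^5 \hookrightarrow S^6$.

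Next I would argue that this pullback bundle is trivial. Since $\pi_5(S^6)=0$, the inclusion $\iota$ is nullhomotopic; by homotopy invariance of pullback bundles, $\iota^*\pi$ is isomorphic to the pullback along a constant map, which is the trivial product bundle $\mathbb{C}P^2 \times S^5 \to S^5$. This yields $\overline{Z}_2 \cong \mathbb{C}P^2 \times S^5$.

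The main delicacy is in the first step, namely verifying that $\pi \circ \psi_H(\overline{g})=g_{\bullet 1}$ via the octonionic identity $g_{\bullet 2}\,g_{\bullet 3}=g_{\bullet 1}$. Once this matching of subspaces is established, the triviality of the restricted bundle is a short, standard homotopy-theoretic observation.
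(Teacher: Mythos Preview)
Your proof is correct and follows the same overall structure as the paper's: identify $\overline{Z}_2$ with $\pi^{-1}(S^5)$ via the bundle diagram \eqref{bund}, then argue that the restricted $\mathbb{C}P^2$-bundle over $S^5$ is trivial. Your verification that $g_{\bullet 2}\,g_{\bullet 3}=g_{\bullet 1}$ makes explicit the commutativity of diagram \eqref{bund}, which the paper only asserts.

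The one genuine difference is in the triviality step. The paper observes that the structure group of the bundle is $K=SU(3)$ and invokes $\pi_4(SU(3))=0$ (via Bott periodicity) to conclude that \emph{every} principal $SU(3)$-bundle over $S^5$, and hence every associated bundle, is trivial. You instead use that the equatorial inclusion $\iota:S^5\hookrightarrow S^6$ is nullhomotopic (since $\pi_5(S^6)=0$, or simply because $S^5$ bounds a disc in $S^6$), so the pullback $\iota^*\pi$ is trivial by homotopy invariance. Your argument is more elementary and avoids any computation of homotopy groups of $SU(3)$; the paper's argument, while requiring more input, yields the stronger conclusion that all $SU(3)$-bundles over $S^5$ are trivial, not just this particular one.
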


\begin{proof}

Let $S^5\subseteq S^6\subseteq \Imo$ denote the equatorial $S^5$ with $i$-coordinate equal to $0$.  Since $g\in Z_2$ iff $g_{11} = 0$, we see that $\overline{Z}_2 = \pi^{-1}(S^5)$.

Pulling back the bundle \eqref{bund} along the inclusion $S^5\rightarrow S^6$, we get a bundle $\mathbb{C}P^2\rightarrow Z_2\rightarrow S^5$ with structure group $K = SU(3)$.  Since $\pi_4(SU(3)) \cong \pi_4(U(3))$ is in the stable range, it vanishes by Bott periodicity.  Thus, every principal $SU(3)$ bundle over $S^5$ is trivial, and hence, so is every associated bundle.  Thus $\overline{Z}_2$ is diffeomorphic to $\mathbb{C}P^2\times S^5$.

\end{proof}

We may now determine the structure of $\overline{Z}_1\cap \overline{Z}_2$. Recall that the Aloff-Wallach Space $W_{1,-1}$ is the homogeneous space $SU(3)/\{\diag(z,\overline{z},1)\}$ where $z\in S^1$.  The subgroup $\diag(z,\overline{z}, 1)$ is conjugate to the subgroup $\diag(R(\alpha), 1)$ and hence, $W_{1,-1}$ and $SU(3)/\{\diag(R(\alpha),1)\}$ are canonically diffeomorphic.

\begin{proposition}
The subspace $\overline{Z}_1\cap \overline{Z}_2$ of $G/H$ is diffeomorphic to the Aloff-Wallach space $W_{1,-1} = W_{1,0}$.

\end{proposition}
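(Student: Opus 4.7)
The plan is to realize $\overline{Z}_1 \cap \overline{Z}_2$ as a homogeneous space $K/L$ where $K = SU(3)$ and $L \cong S^1$ is conjugate in $K$ to the standard Aloff--Wallach isotropy group $\{\text{diag}(z,\bar z,1)\}$. First I would observe that the block-diagonal form of elements of $H \subset SO(7)$ (namely $\text{diag}(1, R(\alpha), A)$) makes right multiplication by $H$ preserve both $Z_1$ and $Z_2$ individually. Consequently the image $\overline{Z}_1 \cap \overline{Z}_2$ in $G/H$ is exactly $(Z_1 \cap Z_2)/H$. Next I would note that the left $K$-action on $G$ preserves $Z_1 \cap Z_2$: for $k \in K$ fixing $i$, the first row $(kg)^{-1}(i) = g^{-1}(k^{-1}(i)) = g^{-1}(i)$ is unchanged, so the defining condition $g^{-1}(i) \in \mathbb{H}\ell$ is preserved. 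Thus $K$ acts on $(Z_1 \cap Z_2)/H$, matching the ambient $K$-action on $G/H$.

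To prove transitivity of this $K$-action, I would use that $g \mapsto g^{-1}(i)$ presents $G$ as a principal $K$-bundle over $S^6$ (for the left action), whose restriction to the preimage of $\mathbb{H}\ell \cap S^6 = S^3$ is precisely $Z_1 \cap Z_2$. Under the identification $H = (S^1\times Sp(1))/\pm 1$ acting on $\mathbb{H}\ell$ by $(z,q)\cdot b\ell = (zb\bar q)\ell$, the group $H$ acts transitively on this $S^3$ with stabilizer of $\ell$ the circle $H_\ell = \{[(z,z)]\}$. Given $g,g' \in Z_1\cap Z_2$, one can then choose $h \in H$ with $(gh)^{-1}(i) = (g')^{-1}(i)$, placing $gh$ and $g'$ in the same $K$-fibre, so $K$ acts transitively on $(Z_1\cap Z_2)/H$. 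Choosing the concrete basepoint $g_0 \in Z_1 \cap Z_2$ determined via Theorem \ref{G2desc} by $g_0(\ell)=i$, $g_0(i\ell)=j$, $g_0(j)=\ell$, I would compute the isotropy $L = K \cap g_0 H g_0^{-1}$. Since $g_0$ conjugates the oriented $jk$-plane to the oriented $(\ell, k\ell)$-plane, $L$ consists of those $k \in SU(3)$ that fix $i$ and preserve the oriented real 2-plane $\mathrm{span}_\mathbb{R}(\ell, k\ell) \subset i^\perp$.

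The main technical step is then identifying the conjugacy class of $L$ in $SU(3)$, which I would do at the Lie algebra level. Conjugating a generator of $\mathfrak{h}_\ell \cong \mathfrak{u}(1)$ (the diagonal $S^1$ in $S^1 \times Sp(1)$) by $g_0$ yields an element of $\mathfrak{su}(3)$ whose action on $i^\perp$ I would compute directly using the octonion multiplication table; the result fixes $j,k$ and acts as a rotation by a common angle on the pairs $(\ell,k\ell)$ and $(i\ell, j\ell)$. Translated into a skew-Hermitian $3\times 3$ matrix via the complex structure given by left multiplication by $i$, this generator has eigenvalues proportional to $\{i,-i,0\}$, exactly matching (up to overall scale) the generator $\text{diag}(i,-i,0)$ of the standard Aloff--Wallach circle. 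Hence $L$ is conjugate in $SU(3)$ to $\{\text{diag}(z,\bar z,1)\}$, so $\overline{Z}_1 \cap \overline{Z}_2 \cong K/L \cong W_{1,-1}$. The main obstacle is this eigenvalue computation: the potential confusion lies in correctly propagating the $S^1 \times Sp(1)/\pm 1$-action on $\mathbb{H}\ell$ through the conjugation by $g_0$ and then correctly identifying the complex structure on $i^\perp$ needed to view $L$ inside $SU(3)$.
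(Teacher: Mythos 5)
Your argument is correct, but it takes a genuinely different route from the paper's. The paper's proof is \emph{constructive}: for $g\in Z_1\cap Z_2$ it observes that the columns $g_{\bullet 2},g_{\bullet 3}$, encoded as complex $3$-vectors $\widetilde g_2,\widetilde g_3$, are Hermitian orthonormal and so uniquely extend to $A(\widetilde g_2,\widetilde g_3)\in SU(3)$; the assignment $\overline g\mapsto [A(\widetilde g_2,\widetilde g_3)]$ is then checked directly to be a well-defined diffeomorphism onto $SU(3)/\{\diag(R(\alpha),1)\}$, with an explicit inverse built via Theorem \ref{G2desc}. You instead present $\overline{Z}_1\cap\overline{Z}_2$ \emph{abstractly} as a homogeneous space $K/L$: transitivity of the left $K$-action follows from the bundle $G\to K\backslash G\cong S^6$, $g\mapsto g^{-1}(i)$, restricted over the $S^3=\mathbb{H}\ell\cap S^6$ on which $H$ acts transitively by right multiplication, and $L$ is then the isotropy at a basepoint. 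The paper's route has essentially no case analysis and gives the diffeomorphism in closed form; yours is more conceptual in that it explains up front why the answer must be a homogeneous $SU(3)$-space, but it outsources the substance to the isotropy computation. That computation is fine (with $g_0$ as you choose, $g_0H_\ell g_0^{-1}$ fixes $i,j,k$ and rotates the pairs $(\ell,k\ell)$ and $(i\ell,j\ell)$ each by a common angle, giving a unitary transformation with eigenvalues $1,e^{\pm 2i\alpha}$), but one subtlety worth flagging: the rotated real $2$-planes $(\ell,k\ell)$ and $(i\ell,j\ell)$ are \emph{not} the complex lines of the $K$-invariant complex structure (those are $\mathrm{span}(\ell,i\ell)$ and $\mathrm{span}(j\ell,k\ell)$), so reading off the eigenvalues is a short additional calculation rather than an immediate observation. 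In fact you can sidestep that calculation entirely: since $L\subseteq SU(3)$ acts trivially on the first complex coordinate, the $\mathfrak{su}(3)$ trace-zero constraint alone forces the generator to have eigenvalues proportional to $(0,i,-i)$, which is all you need for the conjugacy to $\{\diag(z,\bar z,1)\}$.
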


\begin{proof}

Let $g\in Z_1\cap Z_2$, so $g_{11} = g_{12} = g_{13} = 0$.  We recall that because $g\in \Gtwo$, $g_{\bullet 2} g_{\bullet 3} = g_{\bullet 1}$.  In particular, $g_{\bullet 2} g_{\bullet 3} \bot i$.  Since the octonions are a normed algebra, right multiplication by any element of unit length is an isometry, so we see that $(g_{\bullet 2} g_{\bullet 3}) g_{\bullet 3} \bot  i g_{\bullet 3}$.  The octonions are alternative, so $(g_{\bullet 2} g_{\bullet 3})  g_{\bullet 3} = g_{\bullet 2}(g_{\bullet 3})^2 = -g_{\bullet 2}$.  Since this argument is reversible, we see that $g\in Z_1\cap Z_2$ iff $g_{\bullet 2}$ is perpendicular to both $g_{\bullet 3}$ and $i g_{\bullet 3}$.

If we identify $g_{\bullet 2 } = (0, g_{22}, g_{23},.., g_{27})^t$ with the complex $3$-tuple $$\widetilde{g}_2 = (g_{22} + i g_{23}, g_{24} + i g_{25}, g_{26} + ig_{27})^t, $$ then it is easy to verify that the octonion multiplication $ig_{\bullet 2}$ is the equivalent to the complex multiplication $i\,\widetilde{g}_2$.  In particular, the vectors $\widetilde{g}_2,\widetilde{g}_3\in \mathbb{C}^3$ are orthogonal with respect to the usual Hermitian inner product on $\mathbb{C}^3$.  It follows that there is a unique matrix $A = A\left(\widetilde{g}_2,\widetilde{g}_3\right)\in SU(3)$ with columns $\widetilde{g}_2$ and $\widetilde{g}_3$.

\

Now, consider the smooth map $$f:\overline{Z}_1\cap\overline{Z}_2\rightarrow SU(3)/\{\diag(R(\alpha),1)\}$$ given by mapping $[g]$ to $\left[A\left(\widetilde{g}_2,\widetilde{g}_3\right)\right]$.  To see this map is well-defined, we first recall that $H\cong U(2)$ consists of elements of the block diagonal form $\diag(1,R(\alpha), B)$ where $R(\alpha)$ denotes the usual $2\times 2$ rotation matrix, and $B$ is a $4\times 4$ matrix contained in a $U(2)\subseteq SO(4)$.  For such an element in $H$, we have $$[g\,\diag(1,R(\alpha), B)]\mapsto [A(\widetilde{g}_2, \widetilde{g}_3) R(\alpha)] = [A(\widetilde{g}_2,\widetilde{g}_3)].$$

The inverse of $f$ can be constructed as follows.  Given $A\left(\widetilde{g}_2,\widetilde{g}_3\right)\in SU(3)$, the columns $\widetilde{g}_2$ and $\widetilde{g}_3$ are orthogonal with respect to the usual Hermitian inner product on $\mathbb{C}^3$.  Hence, the vectors $g_{\bullet 2}, g_{\bullet 3}\in i^\bot\subseteq \Imo$ are orthogonal, as are $g_{\bullet 3}$ and $ig_{\bullet 2}$.  It follows that $g_{\bullet 2} g_{\bullet 3} \bot i$ as well.  Now, from Theorem \ref{G2desc} there is a unique matrix $g$ in $\Gtwo$ with columns $g_{\bullet 2}$, $g_{\bullet 3}$, and $g_{\bullet 4} =  (1,0,...,0)^t$.  Since $g_{\bullet 2} g_{\bullet 3}\bot i$, $B\in Z_1\cap Z_2$.  The mapping $A\mapsto g$ descends to $$f^{-1}:SU(3)/\{\diag(R(\alpha),1)\}\rightarrow \overline{Z}_1\cap \overline{Z}_2.$$

\end{proof}

This completes the proof of Theorem \ref{main3}.


\textbf{Acknowledgements:  }We would like thank Wolfgang Ziller for helpful comments on an earlier draft of this paper.  We are also grateful to acknowledge support from the Blankenship Undergraduate Research Endowment.\label{ackref}

\end{document}